\documentclass[reqno]{amsart}

\usepackage[letterpaper,top=2cm,bottom=2cm,left=3cm,right=3cm]{geometry}
\usepackage[T1]{fontenc}
\usepackage[utf8]{inputenc}
\usepackage[english]{babel}
\usepackage{lmodern}
\usepackage{microtype}
\usepackage{amsmath,amssymb,amsthm,mathrsfs,mathtools}
\usepackage{booktabs}
\usepackage{graphicx}
\usepackage{xcolor}
\usepackage[colorlinks=true,allcolors=blue]{hyperref}

\theoremstyle{plain}
\newtheorem{theorem}{Theorem}[section]
\newtheorem{proposition}[theorem]{Proposition}
\newtheorem{lemma}[theorem]{Lemma}
\newtheorem{corollary}[theorem]{Corollary}

\theoremstyle{definition}

\newtheorem{remark}[theorem]{Remark}
\newtheorem{problem}[theorem]{Problem}

\newcommand{\defin}[1]{%
\relax\ifmmode%
\textcolor{blue}{#1}%
\else\textcolor{blue}{\emph{#1}}%
\fi%
}

\newcommand{\wvec}{\mathbf{w}}
\newcommand{\PF}{\operatorname{PF}}
\newcommand{\Smir}{\operatorname{Smir}}
\newcommand{\NC}{\operatorname{NC}}
\newcommand{\asc}{\operatorname{asc}}
\newcommand{\wasc}{\operatorname{wasc}}
\newcommand{\des}{\operatorname{des}}
\newcommand{\tie}{\operatorname{tie}}
\newcommand{\pk}{\operatorname{pk}}
\newcommand{\Des}{\operatorname{Des}}
\newcommand{\Tie}{\operatorname{Tie}}
\newcommand{\Cat}{\operatorname{Cat}}
\newcommand{\JacP}{\mathsf{P}}
\newcommand{\lless}{\mathrel{\ll}}
\newcommand{\oeis}[1]{\href{https://oeis.org/#1}{#1}}

\title[Real-rootedness and interlacing]
{Real-rootedness and interlacing for parking functions and Chow polynomials}

\author{Per Alexandersson}
\address{Department of Mathematics, Stockholm University,
  SE-106 91 Stockholm, Sweden}
\email{per.w.alexandersson@gmail.com}

\subjclass[2020]{Primary 05A15; Secondary 05A05, 05E18}
\keywords{Parking function, Smirnov word, Chow polynomial, descent,
  peak, interlacing, real-rooted polynomial}

\begin{document}

\begin{abstract}
We prove real-rootedness for the Chow polynomials of the noncrossing partition
lattices by transferring tieless parking functions to finite-alphabet Smirnov
words and applying an interlacing-preserving transition of M.~Leander.
We also derive a
triangular recurrence for peaks and ties and identify the peakless-tieless
descent polynomial as the Narayana polynomial.  For the toric
$g$-contributions of Ehrenborg--Hetyei--Readdy, we exhibit a fixed-row common
interlacer and establish real-rootedness of all nonnegative row sums.
Individual real-rootedness follows in particular; Q.~Xiao also proved it by a
different differential recurrence.  We also give a
second proof of the individual statement, by finite Schur--Szegő convolution,
that does not use the common interlacer.
These results prove Conjecture~4.2 of Xiao and
Conjecture~11.2 of Ehrenborg--Hetyei--Readdy, with consequences for weakly
123-avoiding parking functions.  We also prove real-rootedness for the
image-size polynomial on all parking functions and for the ascent and descent
polynomials of four two-pattern-avoiding classes.
\end{abstract}

\maketitle

\section{Introduction}\label{sec:introduction}

A \defin{parking function} is a word
$\wvec=(w_1,\dotsc,w_n)\in[n]^n$ whose nondecreasing rearrangement
$b_1\leq\dotsb\leq b_n$ satisfies $b_i\leq i$ for every $i$.
We write $\PF_n$ for the set of parking functions of length $n$.  This is a
classical family of cardinality $(n+1)^{n-1}$; we refer to
C.~H.~Yan~\cite{Yan2015} for a survey.

Several recent questions concern the local comparison statistics of parking
functions.  P.~R.~F.~Schumacher~\cite{Schumacher2018} studied their descents
and ties.  A.~Cruz, P.~E.~Harris, K.~J.~Harry, J.~Kretschmann,
M.~McClinton, A.~Moon, J.~O.~Museus, and
E.~Redmon~\cite{CruzHarrisHarryEtAl2024} subsequently studied descent sets, peaks,
valleys, and statistic encodings.  In particular, they asked for recursive
or closed formulas for the joint distribution of peaks and ties.

The descent polynomial of tieless parking functions has also appeared in a
different setting.  L.~Ferroni, J.~P.~Matherne, and L.~Vecchi proved that the
Chow polynomial of every finite graded bounded poset is palindromic and has
nonnegative, unimodal coefficients.  They also gave examples that are not
real-rooted and conjectured real-rootedness for every Cohen--Macaulay
poset~\cite{FerroniMatherneVecchi2024}.  Real-rootedness is known for several
classes, including uniform geometric lattices and maximal ranked
posets~\cite{BrandenVecchi2026} and UMEL-shellable
posets~\cite{CoronFerroniLi2025}, as well as totally nonnegative posets and
lattices of flats of paving matroids
\cite[Theorem~6.1 and Corollary~7.2]{BrandenVecchi2025TN}.  The ordinary
partition lattice belongs to the UMEL-shellable class
\cite[Theorem~1.8]{CoronFerroniLi2025}, whereas
$\NC_{n+1}$ does not when $n\geq3$~\cite[Section~1]{ChunHayashidaPartidaWang2026};
thus these poset-class results do not apply directly here.  A separate
Toeplitz-matrix theorem of P.~Br\"and\'en and L.~Vecchi does imply
real-rootedness of every finite-alphabet Smirnov descent polynomial
\cite[Theorems~8.4 and~8.11]{BrandenVecchi2025TN}.

J.~Chun, N.~Hayashida, E.~Partida, and Z.~Wang proved that the descent
polynomial of tieless parking functions equals the Chow polynomial of the
noncrossing partition lattice
$\NC_{n+1}$~\cite[Theorem~1.1]{ChunHayashidaPartidaWang2026}.  They proved
$\gamma$-positivity and conjectured real-rootedness
in~\cite[Conjecture~1.3]{ChunHayashidaPartidaWang2026}.

The paper has three main themes.  First, we establish a transfer from tieless
parking functions to finite-alphabet Smirnov words and combine it with
Leander's interlacing-preserving transition to prove real-rootedness for the
noncrossing Chow polynomials.
Second, we study peaks and ties through a finite insertion recurrence and a
Narayana refinement.  Third, we prove
fixed-row interlacing and common-interlacer results for toric
$g$-contributions, with consequences for nonnegative $\gamma$-weighted sums
and weakly 123-avoiding parking functions.  We end with two shorter
applications to image size and two-pattern avoidance.

The first theme settles the conjecture of Chun, Hayashida, Partida, and Wang.

\begin{theorem}\label{thm:mainChow}
For every $n\geq1$, the Chow polynomial $H_{\NC_{n+1}}(t)$ is real-rooted.
\end{theorem}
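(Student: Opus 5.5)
By \cite[Theorem~1.1]{ChunHayashidaPartidaWang2026}, $H_{\NC_{n+1}}(t)$ equals the descent polynomial $D_n(t)=\sum_{\wvec}t^{\des(\wvec)}$ over tieless parking functions $\wvec\in\PF_n$. I will therefore prove that $D_n$ is real-rooted, following the route announced in the introduction: rewrite $D_n$ as a nonnegative combination of descent polynomials of finite-alphabet Smirnov words, then show this combination stays real-rooted using interlacing.

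\emph{Step 1: the transfer.} A word is tieless exactly when it is a Smirnov word (no two equal adjacent letters). The descent statistic depends only on the relative order of the letters. So I group tieless parking functions by their content, that is, the multiset of letter values. For a fixed content with multiplicities $(m_1,\dots,m_k)$ on the distinct values used, the descent polynomial of the tieless arrangements equals the Smirnov descent polynomial $S_{(m_1,\dots,m_k)}(t)$ on the alphabet $[k]$. Hence
\[
D_n(t)=\sum_{\mathbf m} c_{\mathbf m}\, S_{\mathbf m}(t),\qquad c_{\mathbf m}\ge 0,
\]
where $c_{\mathbf m}$ counts the parking-function contents with the compressed multiplicity vector $\mathbf m$. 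A cleaner version should sum instead over all Smirnov words on $[n]$ of length $n$ weighted by the parking condition. Equivalently, for $\ell=1,\dots,n$ one can view $D_n$ as a sum over words in which each prefix-count constraint is imposed letter by letter.

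\emph{Step 2: the letter-by-letter recursion.} I would build the relevant Smirnov polynomials by adding one letter value at a time. The letter $k+1$ is added with $m$ copies inserted into a Smirnov word on $[k]$ in admissible positions (not adjacent to each other). This insertion acts on the descent polynomial by a linear operator $T_m$, of the kind in Leander's transition. Leander's result states that such a transition maps interlacing sequences, indexed by the number of descents or by the position of the last letter, to interlacing sequences. The plan is to carry the refined vector of polynomials, which records the descent polynomial split by a boundary statistic such as whether the word ends in its maximum letter. By induction on $k$, this vector stays pairwise interlacing, that is, it forms a compatible family.

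\emph{Step 3: combining.} The parking condition, that the $i$-th smallest value is at most $i$, is a constraint on cumulative multiplicities. I would encode it by running the recursion over the values $1,\dots,n$. At step $j$, sum over the number $m_j\ge0$ of copies of $j$ such that $m_1+\dots+m_j\ge j$. Each allowed choice contributes a polynomial in the common interlacing family from Step 2. A nonnegative sum of polynomials that share a common interlacer is real-rooted, and summing over $m_j$ preserves the interlacing structure if each $T_{m}$ preserves a common interlacing sequence. At the end, $D_n$ is a nonnegative sum of members of one compatible family, so it is real-rooted.

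The main obstacle is Step 3. Leander's transition guarantees preservation for a fixed insertion multiplicity. Here we must also show that the operators for different $m$ send a common interlacing family to a common interlacing family, and that truncating to values with cumulative count at least $j$ keeps compatibility. I would first try to verify this through the Hermite--Biehler or Obreschkoff criterion on consecutive pairs $T_m f$ and $T_{m+1} f$.
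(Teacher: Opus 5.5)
Your proposal has a genuine gap, and it lies where you place the ``main obstacle.'' After Step~1 you have your $D_n=\sum_{\mathbf m}c_{\mathbf m}S_{\mathbf m}(t)$. This is a nonnegative combination of real-rooted polynomials, and such a combination need not be real-rooted without a compatibility argument. Steps~2--3 do not supply one.

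Step~2 is also incorrect as stated. Deleting all copies of the largest letter from a Smirnov word can create ties; for example, $121\mapsto 11$. So Smirnov words on $[k+1]$ are not obtained by inserting copies of $k+1$ into Smirnov words on $[k]$. A value-by-value insertion must start from words that may have ties, and every tie must receive exactly one new copy, so you would have to track ties as well as descents. The resulting operator is not Leander's transition. Leander's transition appends one letter at a time over a fixed alphabet and is indexed by the last letter.

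Step~3 then needs the operators for all values of $m_j$, together with the truncation $m_1+\dotsb+m_j\geq j$, to preserve a common interlacer. You leave this unproved, and nothing in the plan suggests why it should hold. Checking consecutive pairs $T_mf$, $T_{m+1}f$ would not give a common interlacer for the full sum over $m_j$ anyway.

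The missing idea is that the parking constraint never has to enter the interlacing argument, because symmetry removes it. The Smirnov descent polynomial $S_{\mathbf m}$ is symmetric in $\mathbf m$: it is a coefficient of the Shareshian--Wachs chromatic quasisymmetric function of the path, which is a symmetric function. Hence $S_{\mathbf m}$ depends only on the multiplicity type $\lambda$. By the cycle lemma, there are $K_\lambda$ parking contents of type $\lambda$, while there are $(n+1)K_\lambda$ contents of type $\lambda$ on $[n+1]$. Summing over $\lambda$ gives
\[
\sum_{\substack{\wvec\in\PF_n\\ \tie(\wvec)=0}}t^{\des(\wvec)}
=\frac1{n+1}\sum_{\wvec\in\Smir_{n,n+1}}t^{\des(\wvec)}.
\]
This is an unconstrained sum over Smirnov words on the alphabet $[n+1]$, not a parking-weighted sum on $[n]$ as you guessed.

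Its real-rootedness follows from the last-letter refinement
\[
F_{r+1,j}=t\sum_{k<j}F_{r,k}+\sum_{k>j}F_{r,k},
\]
started from the all-one vector. This is exactly Leander's interlacing-preserving transition, and it yields an interlacing sequence whose sum is the Smirnov descent polynomial. Alternatively, the same scalar statement follows from the Toeplitz-matrix results of Br\"and\'en and Vecchi.
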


The proof has two steps.  We first group contents by multiplicity type and use
the symmetry of the Smirnov word descent enumerator to prove
\begin{equation}\label{eq:introSmirnovTransfer}
H_{\NC_{n+1}}(t)
=\frac{1}{n+1}
\sum_{\substack{\wvec\in[n+1]^n\\
                  w_i\neq w_{i+1}\ (1\leq i<n)}}
t^{\des(\wvec)}.
\end{equation}
After reversing the alphabet order, the final-letter recurrence is the
transition in~\cite[Lemma~2.1]{Leander2016}, now started from an all-one vector
because the first letter is unrestricted.  Leander's preservation theorem
therefore gives a last-letter interlacing vector.  Combined with the transfer
identity, this proves the theorem.  The scalar Smirnov real-rootedness needed
here also follows from the more general Toeplitz-matrix results of Br\"and\'en
and Vecchi.  We give the details in Section~\ref{sec:interlacing}.

The second theme gives a recursive answer to Problems~3 and~4 of Cruz et
al.~\cite{CruzHarrisHarryEtAl2024}.  We bound the maximal letter and insert all
copies of the new maximum.  The resulting weights depend only on the numbers
of peaks and ties.

For $1\leq m\leq n$, let
\[
\defin{H_{n,m}(t,u)}
\coloneqq
\sum_{\substack{\wvec\in\PF_n\\\max_i w_i\leq m}}
t^{\pk(\wvec)}u^{\tie(\wvec)}.
\]
\begin{theorem}\label{thm:parkingPeakTieRecurrence}
For $2\leq m\leq n$,
\begin{equation}\label{eq:parkingPeakTieRecurrence}
H_{n,m}(t,u)
=H_{n,m-1}(t,u)
+\sum_{r=1}^{n-m+1}
\mathcal I_{n-r,r}H_{n-r,m-1}(t,u),
\end{equation}
with initial condition
\begin{equation}\label{eq:parkingPeakTieInitial}
H_{n,1}(t,u)=u^{n-1},
\end{equation}
where the explicit insertion operator $\mathcal I_{\ell,r}$ is given
in~\eqref{eq:insertionOperator}.
\end{theorem}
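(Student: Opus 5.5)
We classify the words counted by $H_{n,m}$ by the number $r$ of occurrences of the maximal letter $m$. If $r=0$, the word uses only letters at most $m-1$. It is a parking function in $\PF_n$ with this bound, so these words contribute $H_{n,m-1}(t,u)$. If $r\geq1$, we delete all $r$ copies of $m$ and obtain a word $\mathbf v$ of length $\ell=n-r$ with letters at most $m-1$.

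The first step is to check that the parking condition transfers. Consider the sorted rearrangement $b_1\le\dots\le b_n$. Its last $r$ entries equal $m$, and the conditions on them reduce to $m\le n-r+1$, that is, $r\le n-m+1$. The first $n-r$ entries are exactly the sorted $\mathbf v$, with the same inequalities $b_i\le i$. Hence $\mathbf w\in\PF_n$ with $\max\le m$ and exactly $r$ copies of $m$ if and only if $\mathbf v\in\PF_{n-r}$ with $\max\le m-1$ and $1\le r\le n-m+1$. Here we use $m-1\ge1$ and $n-r\ge m-1$, so $\PF_{n-r}$ makes sense. Conversely, every way of inserting $r$ copies of $m$ into the $n-r+1$ gaps of $\mathbf v$ (several copies may share a gap) gives such a $\mathbf w$. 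This explains the range of the sum.

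The second step is local bookkeeping under insertion. Since $m$ exceeds every letter of $\mathbf v$, a maximal block $m^k$ placed in a gap changes statistics only locally. A block of $k$ copies contributes $k-1$ new ties. If it is placed in an interior gap between $v_j$ and $v_{j+1}$, it creates one new peak. It may also destroy the old relationship between $v_j$ and $v_{j+1}$:

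- if $v_j=v_{j+1}$, one tie is lost;
- if $(v_{j-1},v_j,v_{j+1})$ formed a peak at $j$ or at $j+1$, that peak disappears, because its neighbour is now $m$.

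We need the convention for $\pk$ (interior, strict). Under it, a block at either end creates no peak. Placing it next to an old peak at the end position destroys that peak only through the neighbour condition.

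The key observation is that the effect depends only on the type of each gap. Classify the interior gaps of $\mathbf v$ by their adjacent pair: a tie; a gap adjacent to exactly one peak (a peak lies on exactly two gaps, and peaks are not adjacent); or neither. The endpoint gaps are a separate type. With $p=\pk(\mathbf v)$ and $s=\tie(\mathbf v)$, the counts are: $s$ tie gaps, $2p$ peak-adjacent gaps, $\ell-1-s-2p$ plain gaps, and $2$ end gaps. We must check that a tie gap is never peak-adjacent, which holds since peaks are strict, and that two blocks in the two gaps flanking the same peak destroy it only once.

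The third step handles this double-count. I would handle it by choosing which peaks are touched and, for each touched peak, whether one or both of its flanking gaps are used. Then I would choose the set of used gaps among the plain, tie and end types. For a set of $g$ used gaps, the number of ways to distribute $r$ copies among them with each used gap nonempty is $\binom{r-1}{g-1}$, and this contributes $u^{r-g}$ ties. Summing gives a weight that is an explicit polynomial in $p$ and $s$ multiplied by $t^{p}u^{s}$. This defines $\mathcal I_{\ell,r}$ as the linear operator
\[
t^pu^s\mapsto \sum \binom{\cdot}{\cdot}\cdots\, t^{p+\text{new}-\text{lost}}u^{s+r-g-\text{lost ties}}.
\]
It can be written as a differential-type operator in $t\partial_t$ and $u\partial_u$, matching~\eqref{eq:insertionOperator}. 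Summing over $\mathbf v$ and $r$ gives~\eqref{eq:parkingPeakTieRecurrence}.

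The initial condition is immediate: the only parking function with maximum at most $1$ is $1^n$, which has $n-1$ ties and no peaks, so $H_{n,1}=u^{n-1}$.

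The main obstacle is the third step: getting the peak-destruction count right when both gaps around a peak are used, and handling peaks next to the ends. I would verify the resulting formula numerically for $n\le4$.
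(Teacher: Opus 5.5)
Your outer argument matches the paper's. You split $\PF_{n,m}$ by the number $r$ of copies of $m$, check that deletion lands exactly in $\PF_{n-r,m-1}$ with $1\le r\le n-m+1$, check that every gap distribution can be reinserted, and read off $H_{n,1}=u^{n-1}$. The gap is in the insertion bookkeeping, which is where the explicit operator~\eqref{eq:insertionOperator}, the real content of the statement, comes from.

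Your rule that a block of $k$ copies in an interior gap ``creates one new peak'' is false under the strict convention you adopt. In $v_j<m=\dotsb=m>v_{j+1}$ with $k\ge2$, no copy is a strict peak. For instance, two copies of $2$ in the middle gap of $\mathbf v=(1,1)$ give $(1,2,2,1)$, of weight $u$ rather than $tu$. Over all six insertions of two copies into $(1,1)$, the true total is $2t+2u+2u^2=[z^2]\mathcal E^2\mathcal C$, whereas your rule gives $2t+u+tu+2u^2$.

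This also breaks your third step structurally. The number of new peaks is the number of used interior gaps that receive exactly one copy. That is not determined by the set of used gaps, so weighting each set by $\binom{r-1}{g-1}u^{r-g}$ discards the information you need. And even with a corrected count, you only assert that the resulting operator ``matches'' \eqref{eq:insertionOperator}. That identification is never carried out, and it is a substitution followed by coefficient extraction, not a differential-operator computation.

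The missing idea is to make the count multiplicative over gaps, with $z$ marking inserted copies. The gap types contribute as follows:
\begin{itemize}
\item An end gap contributes $\mathcal E=1+z/(1-uz)$: a block of $s\ge1$ copies gives $s-1$ ties and never creates or destroys a peak.
\item A plain interior gap contributes $\mathcal A=1+tz+uz^2/(1-uz)$: one copy gives a peak, and $s\ge2$ copies give $s-1$ ties and no peak.
\item A tie gap contributes $\mathcal C=u+tz+uz^2/(1-uz)=u+(\mathcal A-1)$: an empty gap keeps the old tie, and otherwise the tie is lost and the gap acts as a plain gap.
\item The two gaps flanking an old peak contribute jointly $\mathcal B=t+(\mathcal A^2-1)$: the old peak survives exactly when both are empty, and otherwise each gap acts as a plain gap.
\end{itemize}
This joint factor settles your double-destruction worry without any inclusion--exclusion over touched peaks. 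A word with $p$ peaks and $s$ ties then contributes $[z^r]\mathcal E^2\mathcal A^{\ell-1-2p-s}\mathcal B^p\mathcal C^s$. That is exactly what the substitution $t\mapsto\mathcal B/\mathcal A^2$, $u\mapsto\mathcal C/\mathcal A$ in $\mathcal E^2\mathcal A^{\ell-1}F$ produces. This is Lemma~\ref{lem:insertionOperator}, and with it your outer argument completes the proof.
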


We also refine the Catalan enumeration of peakless-tieless parking
functions in~\cite{CruzHarrisHarryEtAl2024}.  Their bijection sends ties of a
nondecreasing parking function to descents, which gives the Narayana
polynomial.

The first two themes share a common principle: local comparison statistics
become tractable after choosing a finite refinement adapted to the statistic.
For tieless descents, we first group contents by multiplicity type and then
refine Smirnov words by their final letter.  For peaks and ties, we bound the
maximum and distinguish the local types of insertion gaps.

The third theme concerns the universal toric $g$-contributions of Ehrenborg,
Hetyei, and Readdy.  They conjectured that every contribution polynomial is
real-rooted
\cite[Conjecture~11.1]{EhrenborgHetyeiReaddy2025}.  Q.~Xiao proved this
conjecture, together with two adjacent-rank interlacings, by a
differential recurrence derived from the Catalan generating function and a
Liu--Wang-type induction~\cite[Theorem~1.3]{Xiao2026}.  Beyond individual
real-rootedness, Theorem~\ref{thm:toricContributionInterlacing} constructs the
explicit common interlacer $g_{n,\lfloor n/2\rfloor}(t)/t$ and proves
that each fixed row is an interlacing sequence, as well as real-rootedness of
every nonnegative row sum; individual real-rootedness is an immediate
corollary.
We retain a separate finite-convolution proof of that corollary in
Appendix~\ref{app:independentToricRealRootedness}.
The fixed-row assertion, together with the two trivial initial rows, resolves
the conjecture formulated by Xiao at the suggestion of C.~Athanasiadis
\cite[Conjecture~4.2]{Xiao2026}.  As Xiao observed
\cite[Section~4]{Xiao2026}, this assertion implies real-rootedness for the
toric $g$-polynomial of every simple polytope with nonnegative
$\gamma$-vector.  In particular, it proves Conjecture~11.2 of
Ehrenborg--Hetyei--Readdy and, through their associahedral identity, gives
real-rootedness of the weak-ascent and strict-descent polynomials on weakly
123-avoiding parking functions.  The weak-ascent coefficient array is
OEIS~\oeis{A390883}~\cite{OEIS2026}.  The common-interlacer argument has also
been checked in Lean~\cite{AlexanderssonRealRooted}.

Two further consequences are independent of the main arguments.  Pollak's
cyclic action and an occupancy recurrence give
simple negative zeros for the image-size polynomial on all parking functions.
A labeled-Dyck-path interpretation of pattern avoidance gives the rows of
OEIS~\oeis{A033282} and their reciprocals, the rows of
OEIS~\oeis{A126216}.

The paper is outlined as follows.  Section~\ref{sec:statistics} fixes the
parking-function statistics.  The first group of results occupies
Sections~\ref{sec:transfer} and~\ref{sec:interlacing}: the former proves the
Smirnov transfer, and the latter establishes the last-letter recurrence and
proves Theorem~\ref{thm:mainChow}.  The second group occupies
Sections~\ref{sec:peakTie} and~\ref{sec:narayana}, which prove the
maximal-letter insertion recurrence and the Narayana refinement, respectively.
Section~\ref{sec:ordinaryDescents} records an additional observation about
ordinary descents.  The third group is Section~\ref{sec:toricContributions},
which constructs a common interlacer by a differential insertion triangle and
a boundary signed-moment argument, then proves fixed-row interlacing.
Appendix~\ref{app:independentToricRealRootedness} gives the separate
finite-convolution proof of individual real-rootedness.
Section~\ref{sec:furtherRealRootedness} records the image-size and
two-pattern consequences.  Finally, Section~\ref{sec:questions} records
further questions.

\section{Parking-function statistics}\label{sec:statistics}

For $\wvec\in\PF_n$ and $i\in[n-1]$, we say that $i$ is an
\defin{ascent}, \defin{descent}, or \defin{tie} according as
\[
w_i<w_{i+1},\qquad w_i>w_{i+1},\qquad\text{or}\qquad w_i=w_{i+1}.
\]
We write $\Des(\wvec)$ and $\Tie(\wvec)$ for the descent and tie sets, and
$\asc(\wvec)$, $\des(\wvec)$, and $\tie(\wvec)$ for the respective numbers
of ascents, descents, and ties.

\subsection{Peaks}\label{subs:peaks}

For $2\leq i\leq n-1$, we call $i$ a \defin{peak} if
\[
w_{i-1}<w_i>w_{i+1}.
\]
We write $\pk(\wvec)$ for the number of peaks.  Thus we use the strict-peak
convention throughout; on tieless words it agrees with the weak-left
convention $w_{i-1}\leq w_i>w_{i+1}$.

\section{Contents and the Smirnov transfer}\label{sec:transfer}

The \defin{content} of a word $\wvec$ is the sequence
$c(\wvec)=(c_1,c_2,\dotsc)$, where $c_j$ is the number of occurrences of $j$.
Its \defin{multiplicity type} is the partition $\lambda\vdash n$ obtained by
sorting the positive entries of $c(\wvec)$.  We write $\ell(\lambda)$ for the
number of parts of $\lambda$ and $m_s(\lambda)$ for the multiplicity of the
part $s$.  Define the \defin{Kreweras number} by
\begin{equation}\label{eq:KrewerasNumber}
\defin{K_\lambda}
\coloneqq
\frac{n!}{(n-\ell(\lambda)+1)!
\prod_{s\geq1}m_s(\lambda)!}.
\end{equation}

A \defin{Smirnov word} is a word with no equal adjacent letters.  Let
$\Smir_{r,m}$ denote the set of Smirnov words of length $r$ on the alphabet
$[m]$.

Let $G_r$ be the naturally labeled path on $[r]$.  Its \defin{chromatic
quasisymmetric function} is
\[
\defin{X_{G_r}(\mathbf x,t)}
\coloneqq
\sum_{\kappa\in\mathcal C(G_r)}
t^{\asc_{G_r}(\kappa)}x_{\kappa(1)}\dotsm x_{\kappa(r)},
\]
where $\mathcal C(G_r)$ is the set of proper positive-integer colorings and
$\asc_{G_r}(\kappa)$ counts the edges $\{i,i+1\}$ with
$\kappa(i)<\kappa(i+1)$.  Reading a coloring in reverse gives a Smirnov word
with the same monomial and turns these ascents into descents.  Hence
\begin{equation}\label{eq:pathSmirnovEnumerator}
X_{G_r}(\mathbf x,t)
=\sum_{\substack{\wvec\text{ Smirnov}\\\text{of length }r}}
t^{\des(\wvec)}x_{w_1}\dotsm x_{w_r}.
\end{equation}
J.~Shareshian and M.~L.~Wachs identify this path function with the Smirnov word
enumerator and give the symmetric-function identity
\begin{equation}\label{eq:pathSmirnovGeneratingSeries}
1+\sum_{r\geq1}X_{G_r}(\mathbf x,t)z^r
=\frac{(1-t)E(z)}{E(tz)-tE(z)},
\qquad
E(z)=\sum_{r\geq0}e_r(\mathbf x)z^r;
\end{equation}
see~\cite[Example~2.5]{ShareshianWachs2016}.  In particular,
$X_{G_r}(\mathbf x,t)$ is symmetric in $\mathbf x$.

\begin{lemma}\label{lem:SmirnovContentSymmetry}
The descent polynomial of Smirnov words with a fixed content depends only on
its multiplicity type.
\end{lemma}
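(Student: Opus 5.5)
The plan is to read the lemma off the symmetry of $X_{G_r}(\mathbf x,t)$ established via \eqref{eq:pathSmirnovGeneratingSeries}. Fix a content $c=(c_1,c_2,\dotsc)$ with $\sum_j c_j=r$. By \eqref{eq:pathSmirnovEnumerator}, the descent polynomial of Smirnov words with content $c$ is exactly the coefficient of the monomial $\mathbf x^{c}=x_1^{c_1}x_2^{c_2}\dotsm$ in $X_{G_r}(\mathbf x,t)$. Indeed, a Smirnov word $\wvec$ contributes $t^{\des(\wvec)}x_{w_1}\dotsm x_{w_r}$, and $x_{w_1}\dotsm x_{w_r}=\mathbf x^{c(\wvec)}$. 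So we need to show that this coefficient depends only on the multiplicity type of $c$.

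By the Shareshian--Wachs identity \eqref{eq:pathSmirnovGeneratingSeries}, the right-hand side is a power series in $z$ whose coefficients are polynomials in $t$ and the elementary symmetric functions $e_k(\mathbf x)$. Expanding $1/(E(tz)-tE(z))$ requires care, since the constant term is $1-t$. One can write
\[
E(tz)-tE(z)=(1-t)\Bigl(1+\sum_{k\ge1}\tfrac{t^k-t}{1-t}e_kz^k\Bigr),
\]
and $(t^k-t)/(1-t)=-(t+\dotsb+t^{k-1})$ is a polynomial. Hence each $X_{G_r}(\mathbf x,t)$ lies in $\Lambda[t]$, that is, it is a symmetric function with coefficients in $\mathbb Z[t]$. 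This is the one step needing attention, and it is already recorded in the paper ("In particular, $X_{G_r}$ is symmetric"), so I would simply cite it.

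Now let $c,c'$ be two contents with the same multiplicity type $\lambda$. Then there is a bijection $\sigma$ of the positive integers, moving only finitely many of them, with $c'_{\sigma(j)}=c_j$ for all $j$. Symmetry of $X_{G_r}$ under $x_j\mapsto x_{\sigma(j)}$ means that the coefficients of $\mathbf x^{c}$ and $\mathbf x^{c'}$ agree; both equal the coefficient of $m_\lambda$ in the monomial expansion. This gives the lemma.

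For a bijective alternative one could use Shareshian--Wachs-style letter swaps on maximal runs of two adjacent letters $a,a+1$: within each run alternating between $a$ and $a+1$, exchange counts in a way that preserves descents. Citing symmetry avoids this, and I expect no real obstacle beyond verifying the polynomiality of the expansion above.
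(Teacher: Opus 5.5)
Your proposal is correct and matches the paper's proof: identify the descent polynomial for a fixed content as the coefficient of $x_1^{c_1}x_2^{c_2}\dotsm$ in $X_{G_r}(\mathbf x,t)$ via \eqref{eq:pathSmirnovEnumerator}, then invoke the symmetry of this function. Your extra check is a valid justification of the symmetry that the paper takes from \eqref{eq:pathSmirnovGeneratingSeries}: the factor $1-t$ cancels, so the coefficients of the expansion lie in $\Lambda[t]$.
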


\begin{proof}
For a content $\mathbf c=(c_1,c_2,\dotsc)$, its descent polynomial is the
coefficient of $x_1^{c_1}x_2^{c_2}\dotsm$ in
$X_{G_r}(\mathbf x,t)$.  The claim follows from the symmetry of this function.
\end{proof}

\begin{lemma}\label{lem:typePlacementRatio}
Let $\lambda\vdash n$ have $b$ parts.  The number of contents of type
$\lambda$ on the alphabet $[n+1]$ is $(n+1)K_\lambda$.
\end{lemma}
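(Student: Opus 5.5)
A content of type $\lambda$ on the alphabet $[n+1]$ is a sequence $(c_1,\dotsc,c_{n+1})$ of nonnegative integers whose positive entries, once sorted, form $\lambda$. We count these directly and then compare with $K_\lambda$. Choosing such a content amounts to deciding which positions $j\in[n+1]$ get which part value. So we assign to each letter either the value $0$ or one of the parts of $\lambda$, with exactly $m_s(\lambda)$ letters receiving the value $s$ for each $s\geq1$, and the remaining $n+1-b$ letters receiving $0$.

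The number of such assignments is a multinomial coefficient:
\[
\frac{(n+1)!}{(n+1-b)!\prod_{s\geq1}m_s(\lambda)!}.
\]
Here $b\leq n<n+1$, so $n+1-b\geq1$ and the expression is well defined. Writing $(n+1)!=(n+1)\,n!$ gives
\[
\frac{(n+1)!}{(n+1-b)!\prod_{s}m_s(\lambda)!}
=(n+1)\cdot\frac{n!}{(n-\ell(\lambda)+1)!\prod_{s}m_s(\lambda)!}
=(n+1)K_\lambda,
\]
using $\ell(\lambda)=b$ and the definition~\eqref{eq:KrewerasNumber}. This is the claim.

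The argument contains no real obstacle. The only care needed is bookkeeping: the zero entries must be treated as a separate block of size $n+1-b$, so that the denominator factorial $(n-b+1)!$ matches the one in $K_\lambda$ exactly.
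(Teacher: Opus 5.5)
Your proof is correct and is essentially the paper's argument: the paper counts the same assignments as $\binom{n+1}{b}\cdot b!/\prod_{s\geq1}m_s(\lambda)!$ (first choose the $b$ support letters, then distribute the parts among them), which equals your multinomial coefficient. Your final simplification to $(n+1)K_\lambda$ matches the paper's comparison with~\eqref{eq:KrewerasNumber}.
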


\begin{proof}
We first choose the $b$ letters in the support, and then assign the parts of
$\lambda$ to these letters.  Repeated parts are indistinguishable, so the
number of contents is
\[
\binom{n+1}{b}\frac{b!}{\prod_{s\geq1}m_s(\lambda)!}
=\frac{(n+1)!}{(n+1-b)!\prod_{s\geq1}m_s(\lambda)!}.
\]
Comparing this expression with~\eqref{eq:KrewerasNumber} gives
$(n+1)K_\lambda$, as desired.
\end{proof}

\begin{lemma}\label{lem:KrewerasContentCount}
The number of nondecreasing parking functions of multiplicity type $\lambda$
is $K_\lambda$.
\end{lemma}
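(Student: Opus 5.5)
A nondecreasing parking function is determined by its content $\mathbf c=(c_1,\dotsc,c_n)$, a weak composition of $n$ with $c_1+\dotsb+c_j\geq j$ for every $j$. So the lemma counts such \emph{parking contents} whose positive entries, sorted, form $\lambda$. I will count them with the cycle lemma, reusing the count of all contents from Lemma~\ref{lem:typePlacementRatio}.

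First, extend the alphabet to $\mathbb Z/(n+1)$ and consider contents $\mathbf c=(c_1,\dotsc,c_{n+1})$ of type $\lambda$ on $[n+1]$. By Lemma~\ref{lem:typePlacementRatio} there are $(n+1)K_\lambda$ of them. The cyclic group $\mathbb Z/(n+1)$ acts on these by rotating indices, and the rotation preserves the multiplicity type.

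Second, I claim that each orbit has exactly $n+1$ elements and contains exactly one parking content, meaning one with $c_{n+1}=0$ and $c_1+\dotsb+c_j\geq j$ for all $j\leq n$. This is Pollak's circular argument in content form. Set $d_j=c_j-1$, so that $\sum_{j=1}^{n+1} d_j=-1$. The cycle lemma for a cyclic sequence of integers with total $-1$ says there is a unique rotation all of whose proper prefix sums are $\geq0$. Two points need checking.
\begin{itemize}
\item The condition $c_1+\dotsb+c_j\geq j$ for $j\leq n$ says that the first $n$ prefix sums of $d$ are nonnegative. Since the full sum is $-1$, this forces $d_{n+1}=-1$, that is, $c_{n+1}=0$.
\item Uniqueness of the good rotation implies that the orbit has no nontrivial stabilizer, so it is free of size $n+1$. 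Alternatively, the total $n$ is coprime to $n+1$, which already rules out any nontrivial periodicity.
\end{itemize}
Hence the number of parking contents of type $\lambda$ equals the number of orbits, $(n+1)K_\lambda/(n+1)=K_\lambda$. Restricting $\mathbf c$ to $[n]$ identifies these parking contents with nondecreasing parking functions.

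The only delicate step is the cycle-lemma verification. I will prove it directly: choose the rotation that starts just after the last position where the prefix sum of $d$ attains its minimum. Then all proper prefix sums of the rotated sequence are $\geq0$, and any other starting point produces some negative proper prefix sum. The remaining steps are bookkeeping.
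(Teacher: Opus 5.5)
Your route is the paper's route. You rotate the $(n+1)K_\lambda$ contents of type $\lambda$ on $[n+1]$, use the total $-1$ to get free orbits of size $n+1$, and apply the cycle lemma to find exactly one parking content per orbit. Your remark that nonnegative proper prefix sums force $d_{n+1}=-1$ is the paper's $s+x=-1$ step.

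The one step that fails as written is your self-contained proof of the cycle lemma: the tie-break is backwards. You must start just after the \emph{first} position where the prefix sum attains its minimum, not the last. A counterexample is $n=2$ with content $(0,1,1)$, so $d=(-1,0,0)$. Here $S_0=0$ and $S_1=S_2=S_3=-1$, so the minimum is tied under any indexing convention. Starting after the last minimizer gives $(-1,0,0)$ or $(0,-1,0)$, and both have a negative proper prefix sum. Starting after the first minimizer gives the good rotation $(0,0,-1)$, which is the content of the parking function $12$.

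With the correct choice, let $k$ be the first index in $\{1,\dotsc,n+1\}$ minimizing $S_k=d_1+\dotsb+d_k$. The forward prefix sums of the rotation are $S_{k+i}-S_k\geq0$. The wrapped ones are $S_j-S_k-1\geq0$, because $S_j>S_k$ for $j<k$. For any other start $k'$, there is a negative proper prefix: if $k'<k$, the prefix ending at $d_k$ sums to $S_k-S_{k'}<0$; if $k'>k$, the wrapped prefix ending at $d_k$ sums to $-1-S_{k'}+S_k\leq-1$. Alternatively, you can simply cite the cycle lemma, as the paper does. The rest of your proof needs no change.
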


\begin{proof}
Let $c=(c_1,\dotsc,c_n)$ be a content vector of type $\lambda$.  It is a
parking-function content if and only if
\[
\sum_{i=1}^j(c_i-1)\geq0
\qquad (1\leq j\leq n).
\]
Append one more entry $-1$ to the sequence
$(c_1-1,\dotsc,c_n-1)$.  The resulting sequence has total sum $-1$; its
entries $s-1$ occur with multiplicity $m_s(\lambda)$, and $-1$ occurs
$n-\ell(\lambda)+1$ times.  The cycle lemma~\cite{Raney1960} says that
exactly one of the $n+1$ cyclic shifts has all proper partial sums
nonnegative.  If $s$ is the sum of the first $n$ entries of this shift and
$x$ is its last entry, then $s\geq0$, $x\geq-1$, and $s+x=-1$.
Consequently, $s=0$ and $x=-1$.  Deleting the last entry recovers exactly
one parking-function content.  Since a sequence of total sum $-1$ has no
nontrivial period, every cyclic orbit has size $n+1$.  Hence the desired
number is
\[
\frac{1}{n+1}
\frac{(n+1)!}{(n-\ell(\lambda)+1)!
\prod_{s\geq1}m_s(\lambda)!}
=K_\lambda.
\]
This is also Kreweras's fixed-block-type formula for noncrossing partitions;
see~\cite[Theorem~4]{Kreweras1972}.
\end{proof}

Thus the same numbers $K_\lambda$ count parking-function contents of type
$\lambda$, while $(n+1)K_\lambda$ counts placements of that type on the
alphabet $[n+1]$.

\begin{theorem}\label{thm:SmirnovTransfer}
For every $n\geq1$,
\begin{equation}\label{eq:SmirnovTransfer}
\sum_{\substack{\wvec\in\PF_n\\\tie(\wvec)=0}}t^{\des(\wvec)}
=\frac{1}{n+1}\sum_{\wvec\in\Smir_{n,n+1}}t^{\des(\wvec)}.
\end{equation}
\end{theorem}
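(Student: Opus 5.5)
The identity follows by grouping both sides according to content and then by multiplicity type. The key observation is that membership in $\PF_n$ depends only on the content of a word. A word $\wvec\in[n]^n$ is a parking function exactly when its nondecreasing rearrangement is one, and that depends only on $c(\wvec)$. So the left-hand side equals
\[
\sum_{\mathbf c}\ \sum_{\substack{\wvec\text{ Smirnov}\\ c(\wvec)=\mathbf c}} t^{\des(\wvec)},
\]
where $\mathbf c$ ranges over parking-function contents, that is, contents of nondecreasing parking functions. A tieless word is precisely a Smirnov word.

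For each $\lambda\vdash n$, write $D_\lambda(t)$ for the descent polynomial of Smirnov words with a fixed content of type $\lambda$. By Lemma~\ref{lem:SmirnovContentSymmetry}, $D_\lambda(t)$ is well defined, independent of which content of type $\lambda$ is chosen. The choice of alphabet does not matter either, since relabelling the support letters order-preservingly keeps both Smirnov-ness and descents, and in any case this is covered by the symmetry of $X_{G_n}$. By Lemma~\ref{lem:KrewerasContentCount}, there are exactly $K_\lambda$ parking-function contents of type $\lambda$. Hence the left-hand side equals $\sum_{\lambda\vdash n}K_\lambda D_\lambda(t)$.

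On the right, the words of $\Smir_{n,n+1}$ are partitioned by their content on the alphabet $[n+1]$. Each content has some type $\lambda\vdash n$ and contributes $D_\lambda(t)$. By Lemma~\ref{lem:typePlacementRatio}, there are $(n+1)K_\lambda$ such contents for each $\lambda$. So the right-hand sum equals $\sum_\lambda (n+1)K_\lambda D_\lambda(t)$, and dividing by $n+1$ gives equality with the left-hand side.

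The only delicate point is the use of Lemma~\ref{lem:SmirnovContentSymmetry} across contents that sit at different positions of the alphabet, for example $\{1,2\}$ versus $\{3,n+1\}$. Symmetry of $X_{G_n}(\mathbf x,t)$ gives equality of the coefficients of $x^{\mathbf c}$ and $x^{\sigma\mathbf c}$ for any permutation $\sigma$ of the variables. This is exactly what is needed, so the argument involves no real obstacle beyond careful bookkeeping. Contents with zero multiplicities are handled automatically because the type only records positive entries.
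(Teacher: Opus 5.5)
Your proposal is correct and follows essentially the same route as the paper. Both sides are partitioned by multiplicity type, Lemma~\ref{lem:SmirnovContentSymmetry} gives a common Smirnov descent polynomial for each type, and the counts $K_\lambda$ (Lemma~\ref{lem:KrewerasContentCount}) and $(n+1)K_\lambda$ (Lemma~\ref{lem:typePlacementRatio}) are compared. Your explicit remarks — that being a parking function depends only on content, and that symmetry of $X_{G_n}$ covers contents placed anywhere in the alphabet $[n+1]$ — spell out steps the paper leaves implicit.
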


\begin{proof}
We partition both sides by multiplicity type $\lambda\vdash n$.  A parking
function content of type $\lambda$ contributes the same Smirnov descent
polynomial as every other content of that type, by
Lemma~\ref{lem:SmirnovContentSymmetry}.  There are $K_\lambda$ such
parking-function contents by Lemma~\ref{lem:KrewerasContentCount}.

On the word side, Lemma~\ref{lem:typePlacementRatio} gives
$(n+1)K_\lambda$ contents of type $\lambda$.  Thus the contribution of every
multiplicity type on the right is exactly $n+1$ times its contribution on the
left.  Summing over $\lambda$ proves~\eqref{eq:SmirnovTransfer}.
\end{proof}

\begin{remark}\label{rem:transferLimitation}
The symmetry hypothesis in this proof is essential.  Ordinary multiset
Eulerian polynomials are symmetric in the multiplicities, and
Lemma~\ref{lem:SmirnovContentSymmetry} gives the corresponding fact for
tieless descents.  A peak enumerator of multiset words can depend on
which multiplicity is assigned to which position in the ordered alphabet.
Thus Theorem~\ref{thm:SmirnovTransfer} does not give a peak transfer.
\end{remark}

Combining Theorem~\ref{thm:SmirnovTransfer}
with~\cite[Theorem~1.1]{ChunHayashidaPartidaWang2026}
gives~\eqref{eq:introSmirnovTransfer}.

\section{Smirnov interlacing and noncrossing Chow polynomials}
\label{sec:interlacing}

The transition below already appears in M.~Leander's work on restricted
Smirnov words.  Her words have fixed first letter $0$, whereas our first letter
is unrestricted.  After reversing the alphabet order, her Lemma~2.1 gives the
same transition as~\eqref{eq:lastLetterRecurrence}, with different initial
data.  Concretely, Leander's boundary condition gives the length-one vector
$(0,t,\dotsc,t)$, whereas our unrestricted first letter gives the all-one
vector.  Her Theorem~2.3 proves that the transition preserves the relevant
compatibility conditions, and Corollary~3.5 gives its interlacing-matrix
formulation~\cite{Leander2016}.  Leander applies this machinery to her
fixed-initial-letter refinements in Theorem~2.4.  Starting it instead from the
all-one vector gives the last-letter statement recorded below.

Separately, Br\"and\'en and Vecchi's results imply that the polynomial on the
right of~\eqref{eq:SmirnovTransfer} is real-rooted for every word length and
alphabet size.  Indeed, specialize their supersymmetric variables to
\[
x_1=\dotsb=x_m=1,\qquad x_i=0\ (i>m),\qquad y_i=0\ (i\geq1).
\]
Then their series is $f(z)=(1+z)^m$, and the words surviving in their
Theorem~8.11 are precisely the Smirnov words on $[m]$, with the same descent
weight and no collision factor.  Their Theorem~8.4 therefore gives the
unrefined real-rootedness~\cite{BrandenVecchi2025TN}.

For $1\leq j\leq m$, define the \defin{reversed last-letter refinement}
\begin{equation}\label{eq:lastLetterDefinition}
\defin{F^{(m)}_{r,j}(t)}
\coloneqq
\sum_{\substack{\wvec\in\Smir_{r,m}\\w_r=m+1-j}}t^{\des(\wvec)}.
\end{equation}
\begin{proposition}\label{prop:lastLetterRecurrence}
For $r\geq1$ and $1\leq j\leq m$,
\begin{equation}\label{eq:lastLetterRecurrence}
F^{(m)}_{1,j}(t)=1,
\qquad
F^{(m)}_{r+1,j}(t)
=t\sum_{k<j}F^{(m)}_{r,k}(t)
+\sum_{k>j}F^{(m)}_{r,k}(t).
\end{equation}
\end{proposition}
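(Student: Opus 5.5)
The proof is a direct last-letter decomposition of Smirnov words, so I would proceed by conditioning on the final letter and tracking the single new adjacent pair created when a letter is appended.

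\emph{Base case.} For $r=1$, the set $\Smir_{1,m}$ consists of the $m$ one-letter words. Each has no adjacent pairs and hence descent number $0$. Exactly one of them ends in $m+1-j$, so $F^{(m)}_{1,j}(t)=1$ for every $j$.

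\emph{Bijection.} For the recurrence, fix $r\geq1$ and $1\leq j\leq m$, and set $a=m+1-j$. I would use the map $\wvec=(w_1,\dotsc,w_{r+1})\mapsto(\wvec',w_{r+1})$ with $\wvec'=(w_1,\dotsc,w_r)$. It is a bijection from $\{\wvec\in\Smir_{r+1,m}: w_{r+1}=a\}$ onto pairs consisting of a word $\wvec'\in\Smir_{r,m}$ with $w_r\neq a$, together with the appended letter $a$. This holds because the Smirnov condition on $\wvec$ is exactly the Smirnov condition on $\wvec'$ plus $w_r\neq w_{r+1}$. Under this map,
\[
\des(\wvec)=\des(\wvec')+[w_r>a].
\]
We then group the words $\wvec'$ by their last letter $w_r=m+1-k$, where $k\neq j$. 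Their total contribution is $F^{(m)}_{r,k}(t)$, multiplied by $t$ exactly when $m+1-k>m+1-j$.

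\emph{Sign bookkeeping.} The only step requiring care is the direction of the inequality under the reversed indexing. The condition $m+1-k>m+1-j$ is equivalent to $k<j$. So the terms with $k<j$ acquire a factor of $t$, and those with $k>j$ do not. The term $k=j$ is excluded by the Smirnov condition. Summing over $k$ gives
\[
F^{(m)}_{r+1,j}(t)=t\sum_{k<j}F^{(m)}_{r,k}(t)+\sum_{k>j}F^{(m)}_{r,k}(t).
\]

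There is no real obstacle here. Beyond the sign bookkeeping, it is worth noting why the base case is the all-one vector: the first letter is unrestricted, in contrast with Leander's initial vector $(0,t,\dotsc,t)$.
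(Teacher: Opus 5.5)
Your proposal is correct and follows the paper's proof essentially verbatim: the base case from one-letter words, deleting the final letter $m+1-j$, and the observation that the previous last letter $m+1-k$ creates a descent exactly when $k<j$. The case $k=j$ is excluded by the Smirnov condition. Your explicit bijection and the identity $\des(\wvec)=\des(\wvec')+[w_r>a]$ just spell out what the paper states more tersely.
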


\begin{proof}
A word of length one has no descent.  For the recurrence, remove the final
letter $m+1-j$.  If the preceding final letter is $m+1-k$ with $k<j$,
appending $m+1-j$ creates a descent and contributes a factor of $t$.  If
$k>j$, it creates an ascent.  The case $k=j$ is excluded by the Smirnov
condition.  These cases give~\eqref{eq:lastLetterRecurrence}.
\end{proof}

For real-rooted polynomials $f$ and $g$ with positive leading coefficients,
list their zeros with multiplicity as
$\alpha_1\geq\dotsb\geq\alpha_r$ and
$\beta_1\geq\dotsb\geq\beta_s$, respectively.  We write
$\defin{f\lless g}$ if $s\in\{r,r+1\}$ and their zeros alternate as
$\beta_{r+1}\leq\alpha_r\leq\beta_r\leq\dotsb
\leq\alpha_1\leq\beta_1$,
where the leftmost term is omitted when $s=r$; for $r=0$, only the degree
condition $s\in\{0,1\}$ remains.
We say that $f$ \defin{interlaces} $g$ in this case.  We follow the orientation
of~\cite[Section~3]{Leander2016}, with the degree condition made explicit;
equalities are allowed in the
displayed root order.  A
degree-$r$ polynomial $h$ is a \defin{common interlacer} of degree-$(r+1)$
polynomials $g_1,\dotsc,g_q$ if $h\lless g_i$ for every $i$.  An
\defin{interlacing sequence} is a sequence $(f_1,\dotsc,f_m)$ such that
$f_i\lless f_j$ whenever $i<j$.

Leander's preservation theorem applies directly to
\eqref{eq:lastLetterRecurrence}, with the all-one vector as initial data
\cite[Corollary~3.5]{Leander2016}.
The general matrix criterion underlying this step is also formalized in the
Lean library~\cite{AlexanderssonRealRooted}.

\begin{theorem}\label{thm:SmirnovRealRooted}
For all $r\geq1$ and $m\geq2$, the vector
\[
\bigl(F^{(m)}_{r,1}(t),F^{(m)}_{r,2}(t),\dotsc,
F^{(m)}_{r,m}(t)\bigr)
\]
is an interlacing sequence.  Consequently,
\[
\sum_{\wvec\in\Smir_{r,m}}t^{\des(\wvec)}
\]
is real-rooted.
\end{theorem}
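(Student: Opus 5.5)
We prove the interlacing statement by induction on $r$, using the linear transition~\eqref{eq:lastLetterRecurrence} of Proposition~\ref{prop:lastLetterRecurrence}, and then deduce real-rootedness of the sum. For $r=1$ every entry is the constant $1$. Constant polynomials of degree zero satisfy $f\lless g$ by the degree convention ($r=0$, $s=0$), so the all-one vector is trivially an interlacing sequence. For the inductive step, suppose $(F_{r,1},\dotsc,F_{r,m})$ is an interlacing sequence of polynomials with nonnegative coefficients. We must show that the new vector
\[
F_{r+1,j}=t\sum_{k<j}F_{r,k}+\sum_{k>j}F_{r,k}
\]
is again an interlacing sequence. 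This is exactly the type of transition treated in Leander's preservation theorem~\cite[Theorem~2.3, Corollary~3.5]{Leander2016}. The transition matrix has entries $t$ strictly below the diagonal, $1$ strictly above, and $0$ on the diagonal. Her criterion says that such a matrix maps interlacing sequences with nonnegative coefficients to interlacing sequences. I would verify that her hypotheses involve only the matrix and the interlacing property of the input, not her specific initial vector $(0,t,\dotsc,t)$, so that they apply verbatim to our all-one start. The alphabet reversal built into the definition~\eqref{eq:lastLetterDefinition} is what makes our matrix coincide with hers.

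If one wanted a self-contained argument, I would proceed through the standard facts. First, for an interlacing sequence $f_1,\dotsc,f_m$, every nonnegative combination of the $f_k$ is real-rooted. Second, all such combinations are interlaced by, or interlace, each $f_i$ in the appropriate direction. Third, if $f\lless g$ then $g\lless tf$ for polynomials with nonnegative coefficients (zeros in $(-\infty,0]$).

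One then checks $F_{r+1,i}\lless F_{r+1,j}$ for $i<j$. The difference in coefficient pattern between the two is only in the indices $k$ with $i\le k\le j$. These are weighted as $(\ldots,0,1,\dots,1,t,\dots)$ versus $(\ldots,t,\dots,t,0,\ldots)$. The comparison reduces to the pairwise relations $F_{r,k}\lless F_{r,l}$ and $F_{r,l}\lless tF_{r,k}$ for $k<l$, which come from the induction hypothesis. The main obstacle is this bookkeeping: the boundary cases where degrees are equal rather than differing by one, and zero roots (e.g.\ $F_{r,1}$ has no factor $t$ while others may). I would handle these by allowing equalities in the root order as in the definition of $\lless$, and by appealing to the closure of the cone of polynomials interlaced by a fixed polynomial.

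For the consequence, the full descent polynomial is $\sum_{j=1}^m F^{(m)}_{r,j}(t)$, a nonnegative combination of an interlacing sequence. By the first standard fact above (equivalently, there is a common interlacer, e.g.\ obtained from the Hermite--Kakeya--Obreschkoff / Fisk lemma), this sum is real-rooted. Here we use that all $F_{r,j}$ have nonnegative coefficients, since they count words. This completes the plan; combined with Theorem~\ref{thm:SmirnovTransfer} with $r=n$, $m=n+1$, it yields Theorem~\ref{thm:mainChow}.
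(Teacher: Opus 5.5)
Your proposal is correct and takes the same route as the paper. You start from the all-one vector at $r=1$, propagate interlacing through the last-letter transition using Leander's preservation theorem (whose hypotheses concern only the transition matrix, not her initial vector), and conclude real-rootedness of the sum because it is a nonnegative combination of an interlacing sequence.

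One small correction to your optional self-contained sketch, which your main argument does not use: the ``second standard fact'' holds only for one-sided combinations, $\sum_{k<i}\lambda_kf_k\lless f_i\lless\sum_{k>i}\lambda_kf_k$. It fails for nonnegative combinations that mix indices on both sides of $i$.
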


\begin{proof}
For $r=1$, every coordinate is the constant polynomial $1$.  Positive
constants are real-rooted polynomials with empty zero sets, so this vector is
interlacing in this convention.
Equation~\eqref{eq:lastLetterRecurrence} and Leander's preservation theorem
give the first claim by induction.

Every nonzero nonnegative linear combination of an interlacing sequence is
real-rooted.  Taking the sum of the last-letter vector gives the unrefined
Smirnov descent polynomial, so the second claim follows.
\end{proof}

\begin{remark}
For $m=1$, the recurrence in
Proposition~\ref{prop:lastLetterRecurrence} gives
$F^{(1)}_{1,1}(t)=1$ and $F^{(1)}_{r,1}(t)=0$ for $r\geq2$.
Theorem~\ref{thm:SmirnovRealRooted} assumes $m\geq2$ so that the zero
polynomial requires no separate convention.
\end{remark}

\begin{proof}[Proof of Theorem~\ref{thm:mainChow}]
By Theorem~\ref{thm:SmirnovTransfer}
and~\cite[Theorem~1.1]{ChunHayashidaPartidaWang2026},
\[
H_{\NC_{n+1}}(t)
=\frac{1}{n+1}\sum_{\wvec\in\Smir_{n,n+1}}t^{\des(\wvec)}.
\]
The polynomial on the right is real-rooted by
Theorem~\ref{thm:SmirnovRealRooted}; alternatively, this scalar statement
follows from the specialization of Br\"and\'en and Vecchi described above.
Multiplication by the positive scalar $1/(n+1)$ does not change its zeros.
The desired result follows.
\end{proof}

\subsection{Coefficient tables}\label{subs:ChowTables}

We set
\[
\defin{C_n(t)}\coloneqq H_{\NC_{n+1}}(t)
=\sum_{k=0}^{n-1}c_{n,k}t^k.
\]
Table~\ref{tab:ChowTriangle} gives the first nine coefficient rows.  Blank
positions are outside the support.  The final column is the row sum, which is
$n^{n-1}$ by Schumacher's tieless enumeration and is recorded as
OEIS~\oeis{A000169}.  At the time of writing, the full coefficient triangle is
not listed in OEIS~\cite{OEIS2026}.

\begin{table}[ht]
\centering
\caption{The coefficient triangle $c_{n,k}=[t^k]C_n(t)$.}
\label{tab:ChowTriangle}
\scriptsize
\resizebox{\textwidth}{!}{%
\begin{tabular}{c*{9}{r}r}
\toprule
$n$&$c_{n,0}$&$c_{n,1}$&$c_{n,2}$&$c_{n,3}$&$c_{n,4}$
&$c_{n,5}$&$c_{n,6}$&$c_{n,7}$&$c_{n,8}$&row sum\\
\midrule
1&1&&&&&&&&&1\\
2&1&1&&&&&&&&2\\
3&1&7&1&&&&&&&9\\
4&1&31&31&1&&&&&&64\\
5&1&116&391&116&1&&&&&625\\
6&1&407&3480&3480&407&1&&&&7776\\
7&1&1401&26097&62651&26097&1401&1&&&117649\\
8&1&4825&178621&865129&865129&178621&4825&1&&2097152\\
9&1&16750&1162684&10228978&20229895&10228978&1162684
&16750&1&43046721\\
\bottomrule
\end{tabular}}
\end{table}

Table~\ref{tab:ChowColumns} records the first two nonconstant columns, a
middle coefficient, and the row sum through $n=15$.
For even $n$, the two middle coefficients agree by palindromicity.  The first
nonconstant column satisfies
\[
c_{n,1}=T(n+1,3),
\]
where $T(n,k)$ is the triangle of reversed partial sums of Narayana rows in
OEIS~\oeis{A104710}; the equality also follows from the closed formula
in~\cite[Corollary~1.2]{ChunHayashidaPartidaWang2026}.  We found no separate OEIS
entries for the second nonconstant column or the middle
coefficients~\cite{OEIS2026}.

\begin{table}[ht]
\centering
\caption{Selected columns of the Chow coefficient triangle.}
\label{tab:ChowColumns}
\begin{tabular}{crrrr}
\toprule
$n$&$c_{n,1}$&$c_{n,2}$&$c_{n,\lfloor(n-1)/2\rfloor}$&$n^{n-1}$\\
\midrule
2&1&--&1&2\\
3&7&1&7&9\\
4&31&31&31&64\\
5&116&391&391&625\\
6&407&3480&3480&7776\\
7&1401&26097&62651&117649\\
8&4825&178621&865129&2097152\\
9&16750&1162684&20229895&43046721\\
10&58730&7362059&382786372&1000000000\\
11&207945&45938080&11015084059&25937424601\\
12&742821&284637432&265930007604&743008370688\\
13&2674348&1759251391&9089277789489&23298085122481\\
14&9694739&10875544864&267453721132672&793714773254144\\
15&35357549&67350530969&10588539011709315&29192926025390625\\
\bottomrule
\end{tabular}
\end{table}

Chun--Hayashida--Partida--Wang proved that $C_n(t)$ is
$\gamma$-positive~\cite[Theorem~1.5]{ChunHayashidaPartidaWang2026}.  We write its
$\gamma$-expansion and corresponding $\gamma$-polynomial as
\begin{align*}
C_n(t)&=\sum_{j=0}^{\lfloor(n-1)/2\rfloor}
\gamma_{n,j}t^j(1+t)^{n-1-2j},\\
\defin{\Gamma_n(t)}&\coloneqq
\sum_{j=0}^{\lfloor(n-1)/2\rfloor}\gamma_{n,j}t^j.
\end{align*}
\begin{corollary}\label{cor:ChowGammaRealRooted}
For every $n\geq1$, the polynomial $\Gamma_n(t)$ is real-rooted.  Moreover,
all its zeros are nonpositive.
\end{corollary}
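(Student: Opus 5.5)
Let $d=n-1$. We use the standard fact that a palindromic polynomial of degree $d$ with nonnegative $\gamma$-vector is real-rooted if and only if its $\gamma$-polynomial is real-rooted with only nonpositive zeros. By Theorem~\ref{thm:mainChow} the first condition holds, so the corollary follows once we carry out this transfer explicitly in the direction from $C_n$ to $\Gamma_n$.

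\textbf{Step 1: factor $C_n$.} The key identity is
\[
C_n(t)=(1+t)^{d}\,\Gamma_n\!\Bigl(\frac{t}{(1+t)^2}\Bigr),
\]
which is immediate from the $\gamma$-expansion. Since the coefficients of $C_n$ are positive, every zero of $C_n$ is negative. Palindromicity means the zero multiset is closed under $t\mapsto 1/t$. So the zeros group into pairs $\{\rho,1/\rho\}$ with $\rho<0$, $\rho\neq-1$, together with some number of copies of $-1$.

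\textbf{Step 2: rewrite as a product.} For each pair we have
\[
(t-\rho)(t-1/\rho)=t^2-(\rho+1/\rho)t+1=(1+t)^2-\sigma t,
\]
where $\sigma=2+\rho+1/\rho$. Because $\rho<0$, the AM--GM inequality gives $\rho+1/\rho\le-2$, so $\sigma\le0$. A zero at $-1$ is the case $\rho=-1$, $\sigma=0$, and contributes a factor $(1+t)$. Write $k$ for the number of pairs. Collecting factors,
\[
C_n(t)=c\,(1+t)^{d-2k}\prod_{i=1}^{k}\bigl((1+t)^2-\sigma_i t\bigr),
\]
with $c>0$ the leading coefficient. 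Expanding the product shows
\[
C_n(t)=c\sum_{j}e_j(-\sigma_1,\dots,-\sigma_k)\,t^j(1+t)^{d-2j}.
\]
Uniqueness of the $\gamma$-expansion then gives
\[
\Gamma_n(s)=c\prod_{i=1}^k(1-\sigma_i s).
\]
Each factor has zero $1/\sigma_i<0$. Factors with $\sigma_i=0$ are constant and contribute no zero. Hence $\Gamma_n$ is real-rooted with all zeros negative, and in particular nonpositive.

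\textbf{Main obstacle.} The step needing the most care is the pairing of zeros. One must check that the zeros of a palindromic polynomial genuinely pair up with multiplicity, including at $-1$ with possibly odd multiplicity, and that $0$ is never a zero since the constant term $c_{n,0}=1$. Both follow from the reciprocity $t^{d}C_n(1/t)=C_n(t)$. The other point to verify is that the $\gamma$-expansion is unique. This holds because the polynomials $t^j(1+t)^{d-2j}$ have distinct lowest-degree terms $t^j$, hence are linearly independent.
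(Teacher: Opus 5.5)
Your argument is correct and follows the paper's route: you combine Theorem~\ref{thm:mainChow} with the transfer of real-rootedness from a palindromic polynomial to its $\gamma$-polynomial. The paper simply cites this transfer as Petersen's Observation~4.2, whereas you prove it inline by pairing the zeros $\rho,1/\rho$ and writing $\Gamma_n(s)=c\prod_i(1-\sigma_i s)$. Your factorization also gives negativity of the zeros, and in fact $\gamma$-positivity itself, directly. The paper instead excludes positive zeros by invoking the $\gamma$-positivity theorem of Chun--Hayashida--Partida--Wang.
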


\begin{proof}
Theorem~\ref{thm:mainChow} and the real-rootedness equivalence for a
palindromic polynomial and its
$\gamma$-polynomial~\cite[Observation~4.2]{Petersen2015} show that
$\Gamma_n(t)$ is real-rooted.
Its coefficients are nonnegative by the $\gamma$-positivity of $C_n(t)$, so
it has no positive zeros.
\end{proof}

The first ten $\gamma$-rows are shown in
Table~\ref{tab:ChowGammaTriangle}.  At the time of writing, neither this
triangle nor its first nonconstant columns appear in OEIS~\cite{OEIS2026}.

\begin{table}[ht]
\centering
\caption{The $\gamma$-coefficients $\gamma_{n,j}$ of $C_n(t)$.}
\label{tab:ChowGammaTriangle}
\begin{tabular}{crrrrr}
\toprule
$n$&$\gamma_{n,0}$&$\gamma_{n,1}$&$\gamma_{n,2}$
&$\gamma_{n,3}$&$\gamma_{n,4}$\\
\midrule
1&1&&&&\\
2&1&&&&\\
3&1&5&&&\\
4&1&28&&&\\
5&1&112&161&&\\
6&1&402&2264&&\\
7&1&1395&20502&13257&\\
8&1&4818&154510&353384&\\
9&1&16742&1062204&5728976&2063809\\
10&1&58721&6950976&73804733&89807052\\
\bottomrule
\end{tabular}
\end{table}

\section{A peak--tie insertion recurrence}\label{sec:peakTie}

We now turn from descents to peaks.  The content transfer of
Section~\ref{sec:transfer} is not available by
Remark~\ref{rem:transferLimitation}, so we work directly with bounded-maximum
parking functions.

For $1\leq m\leq n$, set
\[
\defin{\PF_{n,m}}
\coloneqq
\{\wvec\in\PF_n:\max_i w_i\leq m\}.
\]
Thus $H_{n,m}(t,u)$ is the peak--tie polynomial of $\PF_{n,m}$, and
$H_{n,n}(t,u)$ is the full peak--tie polynomial of $\PF_n$.

Let $\wvec=w_1\dotsm w_\ell$ be a word, and let $M$ be larger than every
letter of $\wvec$.  We insert copies of $M$ in the $\ell+1$ gaps of $\wvec$,
allowing several copies in the same gap.  Introduce a variable $z$ for the
number of inserted copies and set
\begin{equation}\label{eq:gapFactors}
\begin{aligned}
\defin{\mathcal E}&\coloneqq1+\frac{z}{1-uz},\\
\defin{\mathcal A}&\coloneqq1+tz+\frac{uz^2}{1-uz},\\
\defin{\mathcal C}&\coloneqq u+tz+\frac{uz^2}{1-uz},\\
\defin{\mathcal B}&\coloneqq \mathcal A^2+t-1.
\end{aligned}
\end{equation}
These four generating series correspond to endpoint gaps, ordinary internal
gaps, tie gaps, and the paired gaps adjacent to an existing peak.

For fixed $\ell\geq1$ and $r\geq0$, define the linear
\defin{insertion operator}
$\mathcal I_{\ell,r}\colon\mathbb R[t,u]\to\mathbb R[t,u]$ by
\begin{equation}\label{eq:insertionOperator}
\defin{\bigl(\mathcal I_{\ell,r}F\bigr)(t,u)}
\coloneqq
[z^r]\mathcal E^2\mathcal A^{\ell-1}
F\!\left(\frac{\mathcal B}{\mathcal A^2},
\frac{\mathcal C}{\mathcal A}\right).
\end{equation}
\begin{lemma}\label{lem:insertionOperator}
Let $F(t,u)$ enumerate a family of words of length $\ell$ by peaks and
ties.  Then $\bigl(\mathcal I_{\ell,r}F\bigr)(t,u)$ is the polynomial obtained
by inserting exactly $r$ copies of a new maximal letter in every word.
\end{lemma}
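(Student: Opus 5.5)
The plan is to compute, for a single word $\wvec=w_1\dotsm w_\ell$ with $p=\pk(\wvec)$ peaks and $s=\tie(\wvec)$ ties, the generating function by $z^{\#\text{copies}}t^{\pk}u^{\tie}$ of all words obtained by inserting copies of $M$ into its $\ell+1$ gaps. I will show this generating function equals
\[
\mathcal E^2\mathcal A^{\ell-1}\Bigl(\frac{\mathcal B}{\mathcal A^2}\Bigr)^{p}\Bigl(\frac{\mathcal C}{\mathcal A}\Bigr)^{s}.
\]
Summing over the family and extracting $[z^r]$ then gives the lemma by linearity. Inserting copies into distinct gaps are independent choices, and each resulting word arises from exactly one choice of multiplicities $(k_0,\dotsc,k_\ell)$. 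So the generating function should factor over gaps, except where one statistic depends on two gaps simultaneously.

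First I would record what a block $M^k$ does to the local statistics, using that $M$ exceeds every original letter. Inside the block it creates $k-1$ ties. Since $M$ is strictly larger than its original neighbours, no new tie or descent pattern involving original letters can create a new tie. Under the strict-peak convention, a block in an internal gap is a peak exactly when $k=1$; for $k\geq2$ the adjacent equal copies block strictness. A block in an endpoint gap is never a peak. This gives the weight $z/(1-uz)$ for a nonempty endpoint block, hence the factor $\mathcal E$ for each of the two endpoint gaps. It also gives the weight $tz+uz^2/(1-uz)$ for a nonempty internal block.

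Next I would handle the original adjacencies. No original letter can become a new peak, because its neighbours only increase. An original tie $w_i=w_{i+1}$ survives exactly when gap $i$ is empty, so that gap has factor $\mathcal C$ rather than $\mathcal A$. An original peak at position $i$ survives exactly when both gaps $i-1$ and $i$ are empty, since otherwise $w_i$ acquires the neighbour $M>w_i$. The joint factor of these two gaps is therefore $\mathcal A^2$ with its constant term $1$ replaced by $t$, which is $\mathcal B=\mathcal A^2+t-1$. All remaining internal gaps, between original ascents or descents not adjacent to a peak, contribute the factor $\mathcal A$.

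The step needing care is checking that these special gap sets are disjoint, so that the product is well defined. Peaks cannot occupy adjacent positions, so distinct peak pairs share no gap. A gap adjacent to a peak lies between strictly unequal letters, so it is never a tie gap. Every peak position satisfies $2\leq i\leq\ell-1$, so its two gaps are internal; this also covers the case $\ell=1$, where there are no internal gaps. Counting then gives $\ell-1-s-2p$ ordinary internal gaps, and the product $\mathcal E^2\mathcal A^{\ell-1-s-2p}\mathcal C^{s}\mathcal B^{p}$ rearranges to the displayed formula. Applying it to each monomial $t^pu^s$ of $F$ gives $\mathcal E^2\mathcal A^{\ell-1}F(\mathcal B/\mathcal A^2,\mathcal C/\mathcal A)$, and taking $[z^r]$ finishes the proof.
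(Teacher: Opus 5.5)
Your proposal is correct and follows essentially the same route as the paper. Both arguments factor the per-word generating series over gaps, using the endpoint factor $\mathcal E$, the ordinary internal factor $\mathcal A$, the tie factor $\mathcal C$, and the joint peak-pair factor $\mathcal B=\mathcal A^2+t-1$; both then count the $\ell-1-2p-s$ remaining gaps and extract $[z^r]$. Your explicit checks that peak-adjacent gaps are internal and never tie gaps make the paper's disjointness remark slightly more complete.
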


\begin{proof}
Suppose that $\wvec$ has $p$ peaks and $k$ ties.  Inserting $s\geq1$
copies at an endpoint creates $s-1$ ties and no peak, which gives
$\mathcal E$.

At an internal gap not adjacent to a peak, one inserted copy creates a
new peak.  A block of $s\geq2$ copies creates $s-1$ ties and no
peak.  This gives $\mathcal A$.  At an existing tie, inserting nothing
preserves its factor $u$, one copy destroys the tie and creates a peak, and
$s\geq2$ copies replace the old tie by $s-1$ ties.  This gives $\mathcal C$.

Two peaks cannot occur in adjacent positions.  Hence the two gaps adjacent
to one peak are distinct from all gaps adjacent to every other peak.  If
neither gap is occupied, the old peak survives.  If at least one is occupied,
the old peak disappears, and the two $\mathcal A$ factors record the new local
peaks and ties.  The joint factor is therefore
$\mathcal A^2+t-1=\mathcal B$.

No other interaction joins two gaps.  A new peak is centered at a single
inserted copy in an internal gap, an old tie changes only when its own gap is
occupied, and an old nonpeak cannot become a peak after a larger neighbor is
inserted.  Thus the four classes above exhaust all changes in peaks and ties.

There are $\ell-1-2p-k$ remaining nontie internal gaps.  Hence all insertions
into $\wvec$ have generating series
\[
\mathcal E^2\mathcal A^{\ell-1-2p-k}\mathcal B^p\mathcal C^k.
\]
The monomial $t^pu^k$ in $F$ is transformed into this expression by the
substitution in~\eqref{eq:insertionOperator}.  Extracting $[z^r]$ completes
the proof.
\end{proof}

\begin{proof}[Proof of Theorem~\ref{thm:parkingPeakTieRecurrence}]
We partition $\PF_{n,m}$ according to the number $r$ of occurrences of $m$.
The case $r=0$ contributes $H_{n,m-1}(t,u)$.

Suppose that $r>0$.  In the nondecreasing rearrangement, the first copy of
$m$ is in position $n-r+1$.  The parking condition therefore gives
$m\leq n-r+1$, or equivalently $r\leq n-m+1$.

Deleting all copies of $m$ leaves an element of $\PF_{n-r,m-1}$.
Conversely, the inequality $n-r\geq m-1$ shows that inserting $r$ copies of
$m$ into any element of $\PF_{n-r,m-1}$ produces a parking function.  More
precisely, deletion gives the shorter word together with the unique block
size of the deleted letters in each of its gaps.  These nonnegative block
sizes sum to $r$, and reinserting the corresponding blocks recovers the
original word.  We therefore have a bijection between the words in question
and the insertions enumerated by Lemma~\ref{lem:insertionOperator}, which
gives~\eqref{eq:parkingPeakTieRecurrence}.

Finally, the only word in $\PF_{n,1}$ is $1^n$.  It has $n-1$ ties and no
peak, which proves~\eqref{eq:parkingPeakTieInitial}.  This completes
the proof.
\end{proof}

Theorem~\ref{thm:parkingPeakTieRecurrence} determines every number
\[
\#\{\wvec\in\PF_n:\pk(\wvec)=j,\ \tie(\wvec)=k\}.
\]
It therefore supplies a recursive answer to Problems~3 and~4 of
Cruz et al.~\cite{CruzHarrisHarryEtAl2024}.

\subsection{Peaks alone}\label{subs:peaksAlone}

Set
\[
\defin{P_{n,m}(t)}\coloneqq H_{n,m}(t,1),
\qquad
\defin{P_n(t)}\coloneqq P_{n,n}(t).
\]
When $u=1$, the factors in~\eqref{eq:gapFactors} become
\[
\mathcal E=\frac{1}{1-z},
\qquad
\mathcal A=1+tz+\frac{z^2}{1-z},
\qquad
\mathcal B=\mathcal A^2+t-1.
\]
Thus the peak insertion operator is
\begin{equation}\label{eq:peakInsertionOperator}
\bigl(\mathcal I_{\ell,r}F\bigr)(t)
=[z^r]\frac{\mathcal A^{\ell-1}}{(1-z)^2}
F\!\left(\frac{\mathcal B}{\mathcal A^2}\right).
\end{equation}
For one inserted maximum, the operator has a familiar differential form.

\begin{proposition}\label{prop:singleMaximumOperator}
For a polynomial $F(t)$ enumerating words of length $\ell$ by peaks,
\begin{equation}\label{eq:singleMaximumOperator}
\bigl(\mathcal I_{\ell,1}F\bigr)(t)
=\bigl(2+(\ell-1)t\bigr)F(t)
+2t(1-t)F'(t).
\end{equation}
\end{proposition}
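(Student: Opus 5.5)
Extract the coefficient of $z^1$ directly from \eqref{eq:peakInsertionOperator}, working monomial by monomial. By linearity of $\mathcal I_{\ell,1}$ it suffices to take $F(t)=t^p$. For this monomial the insertion series is
\[
\frac{\mathcal A^{\ell-1}}{(1-z)^2}\Bigl(\frac{\mathcal B}{\mathcal A^2}\Bigr)^{p}
=\frac{\mathcal A^{\ell-1-2p}\mathcal B^{p}}{(1-z)^2}.
\]
This expression is a formal power series in $z$, because $\mathcal A=1+O(z)$ is invertible. Expanding each factor to first order in $z$ gives
\[
\mathcal A=1+tz+O(z^2),\qquad
\mathcal B=\mathcal A^2+t-1=t+2tz+O(z^2),\qquad
(1-z)^{-2}=1+2z+O(z^2).
\]

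Multiplying these expansions, the $z^1$ coefficient of $\mathcal A^{\ell-1-2p}\mathcal B^p(1-z)^{-2}$ is
\[
t^p\bigl((\ell-1-2p)t+2p+2\bigr).
\]
The three contributions are $(\ell-1-2p)t\cdot t^p$ from the $\mathcal A$-power, $p\cdot t^{p-1}\cdot 2t=2pt^p$ from the $\mathcal B$-power, and $2t^p$ from the endpoint factor. Rearranged, this is
\[
\bigl(2+(\ell-1)t\bigr)t^p+2p(1-t)t^p
=\bigl(2+(\ell-1)t\bigr)t^p+2t(1-t)\frac{d}{dt}t^p,
\]
which is \eqref{eq:singleMaximumOperator} for $F=t^p$. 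Summing over the monomials of $F$ then gives the general statement.

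The only delicate point is the exponent $\ell-1-2p$, which can be negative when $F$ is an arbitrary polynomial rather than a genuine peak enumerator. In that case I would remark that the expansion $\mathcal A^{k}=1+ktz+O(z^2)$ remains valid for every integer $k$, since $\mathcal A$ is a unit in $\mathbb R[t][[z]]$. Alternatively, one can restrict to $p\le(\ell-1)/2$, which always holds for words of length $\ell$ because peaks are nonadjacent interior positions.

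As a sanity check, I would also verify the formula combinatorially via Lemma~\ref{lem:insertionOperator}. A single new maximum inserted at one of the $2$ endpoints changes nothing. Inserted at one of the $\ell-1-2p$ internal gaps not adjacent to a peak, it adds a peak. Inserted at one of the $2p$ gaps adjacent to a peak, it kills that peak and creates a new one, so the count is unchanged. This gives $2+(\ell-1-2p)t+2p$ times $t^p$, in agreement with the computation above.
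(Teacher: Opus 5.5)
Your proof is correct, but your main route differs from the paper's. The paper proves \eqref{eq:singleMaximumOperator} purely combinatorially. For a word with $p$ peaks, it sorts the $\ell+1$ positions for a single new maximum: the two endpoint gaps and the $2p$ peak-adjacent gaps preserve $p$, and the remaining $\ell-1-2p$ gaps add one peak. This gives $t^p\mapsto(2+2p)t^p+(\ell-1-2p)t^{p+1}$, extended linearly, which is exactly your closing sanity check.

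Your primary argument instead extracts $[z^1]$ from the closed form \eqref{eq:peakInsertionOperator}. Your first-order expansions of $\mathcal A$, $\mathcal B$ and $(1-z)^{-2}$ are right, and so is the resulting coefficient. This has two advantages:
\begin{itemize}
\item You work directly with the operator as defined. You never need Lemma~\ref{lem:insertionOperator} to identify $\mathcal I_{\ell,1}$ with a count of insertions, and you avoid redoing the local gap analysis already encoded in $\mathcal E$, $\mathcal A$, $\mathcal B$.
\item Because $\mathcal A$ is a unit in $\mathbb R[t][[z]]$, you obtain the identity for every polynomial $F$, not only for genuine peak enumerators.
\end{itemize}
The paper's count is shorter and shows where each term comes from. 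Its costs are that it relies on the combinatorial meaning of the operator and covers only monomials with $2p\leq\ell-1$. That restriction is harmless here, since it is all the statement and its use in \eqref{eq:topPeakRecurrence} require.
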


\begin{proof}
Consider a word with $p$ peaks.  Insertion in either endpoint gap
preserves $p$.  The $2p$ gaps adjacent to existing peaks replace an old peak
by the new maximal peak and also preserve $p$.  Each of the remaining
$\ell-1-2p$ gaps creates one additional peak.  Hence $t^p$ is sent to
\[
(2+2p)t^p+(\ell-1-2p)t^{p+1}.
\]
Extending linearly and collecting the terms involving $p$ gives
\eqref{eq:singleMaximumOperator}, as desired.
\end{proof}

Since the value $n$ occurs at most once in a parking function of length $n$,
Theorem~\ref{thm:parkingPeakTieRecurrence} and
Proposition~\ref{prop:singleMaximumOperator} give
\begin{equation}\label{eq:topPeakRecurrence}
P_n(t)
=P_{n,n-1}(t)
+\bigl(2+(n-2)t\bigr)P_{n-1}(t)
+2t(1-t)P'_{n-1}(t).
\end{equation}
The differential peak operator is therefore present, but the bounded-maximum
term $P_{n,n-1}(t)$ obstructs a recurrence in $P_n$ and $P_{n-1}$ alone.

Peaklessness alone does not lead to a real-rooted descent refinement.  For
instance,
\[
\sum_{\substack{\wvec\in\PF_5\\\pk(\wvec)=0}}t^{\des(\wvec)}
=42+169t+119t^2+20t^3+t^4,
\]
is not real-rooted; its discriminant is $-292515183$.

\section{The Narayana refinement}\label{sec:narayana}

We now give a descent refinement of the Catalan result for
peakless-tieless parking functions.

Let $\PF_n^\uparrow$ denote the set of nondecreasing parking functions.
For $\wvec\in\PF_n^\uparrow$, let
\[
\defin{\Tie^\uparrow(\wvec)}
\coloneqq\{r\in\{2,\dotsc,n\}:w_{r-1}=w_r\}
=\{t_1<t_2<\dotsb<t_j\},
\]
and list the distinct values of $\wvec$ as
\[
1=b_1<b_2<\dotsb<b_k.
\]
Cruz et al. define the following map from the nondecreasing model to the
peakless-tieless model:
\begin{equation}\label{eq:CruzBijection}
\begin{aligned}
\defin{\Psi_n}:\PF_n^\uparrow
&\longrightarrow
\{\mathbf v\in\PF_n:\pk(\mathbf v)=\tie(\mathbf v)=0\},\\
\wvec
&\longmapsto(t_j,t_{j-1},\dotsc,t_1,1,b_2,\dotsc,b_k).
\end{aligned}
\end{equation}
They prove that it is a bijection~\cite[Theorem~4.4]
{CruzHarrisHarryEtAl2024}.  The arrow in~\eqref{eq:CruzBijection}
starts with a nondecreasing parking function and produces a word that is not
necessarily nondecreasing.  Its inverse is also concrete.  A peakless-tieless
word decreases to its unique entry $1$ and then increases.  The entries
before $1$, read increasingly, give the tie positions $t_1,\dotsc,t_j$;
the entries from $1$ onward give the distinct values
$1,b_2,\dotsc,b_k$.  These two lists determine the original nondecreasing
parking function: use the values $b_1,\dotsc,b_k$ as its successive constant
runs and, among positions $2,\dotsc,n$, start a new run precisely at those
not among $t_1,\dotsc,t_j$.

For example, if
\[
\wvec=(1,1,2,2,4),
\]
then its tie positions are $2,4$ and its distinct values are $1,2,4$, so
\[
\Psi_5(1,1,2,2,4)=(4,2,1,2,4).
\]
Splitting the word on the right at $1$ recovers the two lists and hence
recovers $\wvec$.

There is a second, classical bijection behind the Narayana numbers.  Given
$\wvec\in\PF_n^\uparrow$, set
\begin{equation}\label{eq:parkingAreaSequence}
\defin{a_r}\coloneqq r-w_r,
\qquad 1\leq r\leq n.
\end{equation}
The parking-function inequalities and monotonicity say precisely that
$a_1=0$, $a_r\geq0$, and $a_{r+1}\leq a_r+1$.  Thus
$(a_1,\dotsc,a_n)$ is the area sequence of a Dyck path.  Conversely,
$w_r=r-a_r$ recovers the nondecreasing parking function.  Moreover,
\begin{equation}\label{eq:tiesAndDoubleRises}
w_{r-1}=w_r
\quad\Longleftrightarrow\quad
a_r=a_{r-1}+1.
\end{equation}
On the Dyck path, the condition on the right means that the $r$th north step
immediately follows the $(r-1)$st north step.  Hence ties correspond to
double rises.  Every vertical run ends in one Dyck-path peak, that is, an
occurrence $NE$, and a run of length $s$ contributes $s-1$ double rises.
Consequently, $j$ ties correspond to $n-j$ Dyck-path peaks.  Here a
Dyck-path peak is distinct from the word statistic $\pk$.
For the running example, the area sequence is $(0,1,1,2,1)$; its two rises
by one occur at positions $2$ and $4$, so the corresponding Dyck path has
two double rises and three peaks.

\begin{theorem}\label{thm:NarayanaRefinement}
For every $n\geq1$,
\begin{equation}\label{eq:NarayanaRefinement}
\sum_{\substack{\wvec\in\PF_n\\\pk(\wvec)=0\\\tie(\wvec)=0}}
t^{\des(\wvec)}
=\sum_{j=0}^{n-1}
\frac{1}{n}\binom{n}{j}\binom{n}{j+1}t^j.
\end{equation}
\end{theorem}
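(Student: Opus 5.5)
We transport the descent statistic through the bijection $\Psi_n$ of Cruz et al.\ and then through the area-sequence bijection \eqref{eq:parkingAreaSequence}. Since $\Psi_n$ is a bijection onto the peakless-tieless parking functions \cite[Theorem~4.4]{CruzHarrisHarryEtAl2024}, the left side of \eqref{eq:NarayanaRefinement} equals $\sum_{\wvec\in\PF_n^\uparrow}t^{\des(\Psi_n(\wvec))}$.

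The first step is to compute $\des(\Psi_n(\wvec))$. The image is $(t_j,\dotsc,t_1,1,b_2,\dotsc,b_k)$. The tie positions satisfy $t_j>\dotsb>t_1\geq2>1$, so the block before the letter $1$, together with the step into $1$, is strictly decreasing. The block $1<b_2<\dotsb<b_k$ is strictly increasing. The descents are therefore exactly the $j$ steps $t_j>t_{j-1}>\dotsb>t_1>1$, giving
\[
\des(\Psi_n(\wvec))=j=\tie(\wvec).
\]
If $j=0$, the word begins with $1$ and has no descent, which is consistent. Hence the left side equals $\sum_{\wvec\in\PF_n^\uparrow}t^{\tie(\wvec)}$.

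The second step uses the area-sequence bijection and \eqref{eq:tiesAndDoubleRises}. A nondecreasing parking function with $j$ ties corresponds to a Dyck path of semilength $n$ with $j$ double rises, equivalently with $n-j$ peaks, as noted before the theorem. The classical Narayana count says that the number of Dyck paths of semilength $n$ with $p$ peaks is $\frac1n\binom{n}{p}\binom{n}{p-1}$. Taking $p=n-j$ gives
\[
\frac1n\binom{n}{n-j}\binom{n}{n-j-1}=\frac1n\binom{n}{j}\binom{n}{j+1},
\]
which is the stated coefficient. Equivalently, one can invoke the symmetry $N(n,p)=N(n,n+1-p)$ directly.

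The only point needing care is the descent count in the first step: checking that $t_1>1$ always holds, since tie positions lie in $\{2,\dotsc,n\}$, and that the increasing block contributes no descents. The Narayana enumeration by peaks I would cite as classical; if a self-contained argument is preferred, I would use the cycle lemma, in the same spirit as Lemma~\ref{lem:KrewerasContentCount}. There one sums $K_\lambda$ over partitions $\lambda\vdash n$ with $\ell(\lambda)=n-j$ parts: the number of ties of a nondecreasing word is $n-\ell(\lambda)$.
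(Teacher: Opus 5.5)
Your proposal is correct and matches the paper's proof: both read off $\des(\Psi_n(\wvec))=\tie(\wvec)$ from the decreasing block $t_j>\dotsb>t_1>1$ followed by the increasing block $1<b_2<\dotsb<b_k$, and then count nondecreasing parking functions with $j$ ties via area sequences, double rises, and the Narayana number $N(n,n-j)$. Your optional cycle-lemma route also works, since summing $K_\lambda$ over $\ell(\lambda)=n-j$ gives $\frac{1}{n+1}\binom{n+1}{j+1}\binom{n-1}{j}=\frac{1}{j+1}\binom{n}{j}\binom{n-1}{j}$, which is the same Narayana number.
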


\begin{proof}
In~\eqref{eq:CruzBijection}, the segment
$t_j,t_{j-1},\dotsc,t_1,1$ has exactly $j$ descents, while
$1,b_2,\dotsc,b_k$ is strictly increasing.  Consequently,
\[
\des(\Psi_n(\wvec))=\tie(\wvec).
\]
By~\eqref{eq:parkingAreaSequence}--\eqref{eq:tiesAndDoubleRises}, the source
words with $j$ ties correspond to Dyck paths with $n-j$ peaks.  The Narayana
number for these paths is
\[
N(n,n-j)
=\frac{1}{j+1}\binom{n}{j}\binom{n-1}{j}
=\frac{1}{n}\binom{n}{j}\binom{n}{j+1};
\]
this is also Schumacher's tie enumeration~\cite[Theorem~12]{Schumacher2018}.
Summing by $j$ and applying the bijection $\Psi_n$ proves
\eqref{eq:NarayanaRefinement}.
\end{proof}

The right side of~\eqref{eq:NarayanaRefinement} is the ordinary Narayana
polynomial.  In particular, the peakless-tieless descent polynomial is
real-rooted.

\section{Ordinary descents}
\label{sec:ordinaryDescents}

We next record a short observation about ordinary descents.  It is independent
of Theorem~\ref{thm:SmirnovTransfer}: here ties are allowed, and the relevant
word family is $[r]^n$ rather than the Smirnov words.  The result below is a
direct consequence of two known ingredients.

For positive integers $n$ and $r$, define the \defin{all-word descent
polynomial}
\[
\defin{E_{n,r}(t)}
\coloneqq
\sum_{\wvec\in[r]^n}t^{\des(\wvec)}.
\]
Athanasiadis, Douvropoulos, and Kalampogia-Evangelinou prove that
$E_{n,r}(t)$ is real-rooted for all $n,r\geq1$; see
\cite[proof of Theorem~3(a)]
{AthanasiadisDouvropoulosKalampogiaEvangelinou2024}.

\begin{corollary}\label{cor:parkingDescentRealRooted}
For every $n\geq1$,
\begin{equation}\label{eq:DiaconisHicksDescentTransfer}
\defin{D_n(t)}
\coloneqq
\sum_{\wvec\in\PF_n}t^{\des(\wvec)}
=\frac{1}{n+1}\sum_{\wvec\in[n+1]^n}t^{\des(\wvec)}.
\end{equation}
In particular, $D_n(t)$ is real-rooted.  Its coefficients form the rows of
OEIS~\oeis{A333829}~\cite{OEIS2026}.
\end{corollary}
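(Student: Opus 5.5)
The plan is to run the content-type argument of Theorem~\ref{thm:SmirnovTransfer} again, this time with ties allowed. Then \eqref{eq:DiaconisHicksDescentTransfer} gives real-rootedness directly from the result of Athanasiadis, Douvropoulos, and Kalampogia-Evangelinou.

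\emph{Step 1 (content symmetry).} For a fixed content $\mathbf c$, the descent polynomial of words with content $\mathbf c$ is the multiset Eulerian polynomial. I will show that it depends only on the multiplicity type, which is the symmetry mentioned in Remark~\ref{rem:transferLimitation}. The cleanest route mirrors Lemma~\ref{lem:SmirnovContentSymmetry}. The generating function $\sum_{\wvec\text{ of length }n}t^{\des(\wvec)}x_{w_1}\dotsm x_{w_n}$ is symmetric in $\mathbf x$. For instance, it is the chromatic quasisymmetric function of the edgeless-free complete-path incomparability structure, or more directly it follows from MacMahon's formula
\[
\sum_{k\ge0}\prod_j\binom{c_j+k}{c_j}t^k=\frac{A_{\mathbf c}(t)}{(1-t)^{n+1}},
\]
whose left side is visibly invariant under permuting the $c_j$ (use $\des$ in place of $\asc$ by reversing the alphabet). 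I will cite MacMahon's formula, since that avoids any new symmetric-function machinery.

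\emph{Step 2 (counting).} Lemma~\ref{lem:KrewerasContentCount} shows that there are $K_\lambda$ parking-function contents of type $\lambda$. Every word with a parking-function content is a parking function, because the parking property depends only on the content. Lemma~\ref{lem:typePlacementRatio} shows that there are $(n+1)K_\lambda$ contents of type $\lambda$ on $[n+1]$. Partitioning $\PF_n$ and $[n+1]^n$ by type and applying Step~1 then gives the identity type by type, and summing over $\lambda\vdash n$ yields \eqref{eq:DiaconisHicksDescentTransfer}. This is the Diaconis--Hicks-style transfer. Alternatively, one can use Pollak's cyclic action on $[n+1]^n$ modulo $n+1$, but that action does not preserve descents, so the content route is the right one.

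\emph{Step 3.} The right side is $E_{n,n+1}(t)/(n+1)$, which is real-rooted by the cited theorem, and scaling does not affect zeros. The only step that needs care is Step~1: it requires the symmetry of the multiset descent polynomial in the multiplicities. I expect MacMahon's product formula to settle it immediately. The OEIS identification needs only a check of the initial rows.
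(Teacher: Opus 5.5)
Your proof is correct, but it takes a different route from the paper. The paper does not rerun the content argument. It cites \cite[Theorem~8]{DiaconisHicks2017}, which gives equality of the full descent-set distributions of a uniform element of $\PF_n$ and a uniform word in $[n+1]^n$ for $n\geq2$, and then sums $t^{|S|}$ over $S\subseteq[n-1]$. You instead repeat the proof of Theorem~\ref{thm:SmirnovTransfer} with ties allowed, using MacMahon's formula in place of Lemma~\ref{lem:SmirnovContentSymmetry}. Its left side is invariant under permuting the $c_j$ and unaffected by zero entries, so contents on $[n]$ and on $[n+1]$ can be compared. Lemmas~\ref{lem:typePlacementRatio} and~\ref{lem:KrewerasContentCount} then finish the argument, uniformly for all $n\geq1$.

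The citation is shorter and yields the finer descent-set statement. Your version stays inside the paper's framework and shows both transfers as one content-type mechanism, which is what Remark~\ref{rem:transferLimitation} alludes to. Your method also reaches the finer statement. Words with $\Des(\wvec)\subseteq S$ are exactly those weakly increasing on the blocks cut out by $S$, so their enumerator is a product of complete homogeneous symmetric functions. Inclusion--exclusion then makes the number of words of content $\mathbf c$ with $\Des(\wvec)=S$ symmetric in $\mathbf c$.

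Drop the aside about a chromatic quasisymmetric function. Allowing ties forces the edgeless graph, on which $\asc_{G}$ vanishes identically, so that phrase names no valid object, and MacMahon's formula is all you need. Your remark that Pollak's action does not preserve descents is right; it is why the paper uses that action only for image size.
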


\begin{proof}
For $n\geq2$, Diaconis and Hicks show that the full descent-set distribution
on a uniformly chosen parking function in $\PF_n$ agrees with that on a
uniformly chosen word in $[n+1]^n$~\cite[Theorem~8]{DiaconisHicks2017}.  Thus,
for every $S\subseteq[n-1]$, we have
\[
\frac{|\{\wvec\in\PF_n:\Des(\wvec)=S\}|}{(n+1)^{n-1}}
=
\frac{|\{\wvec\in[n+1]^n:\Des(\wvec)=S\}|}{(n+1)^n}.
\]
Multiplying by $t^{|S|}$ and summing over $S\subseteq[n-1]$ gives
\eqref{eq:DiaconisHicksDescentTransfer}; the case $n=1$ follows directly.
The sum on its right-hand side is $E_{n,n+1}(t)$, which is real-rooted by the
all-word result above.  Since multiplication by the positive scalar
$1/(n+1)$ does not change the zeros, $D_n(t)$ is real-rooted.
\end{proof}

\section{Toric \texorpdfstring{$g$}{g}-contributions and 123-avoidance}
\label{sec:toricContributions}

We next study the universal contribution polynomials that occur in the toric
$g$-polynomial of a simple polytope.  Xiao proved their
real-rootedness and two adjacent-rank interlacings by a differential
three-term recurrence and a Liu--Wang-type sign argument
\cite[Theorem~1.3]{Xiao2026}.  At the suggestion of C.~Athanasiadis, Xiao
conjectured that every fixed row is an interlacing sequence.  We prove that
conjecture and the common-interlacer statement needed for 123-avoiding parking
functions; individual real-rootedness follows as a corollary.
Appendix~\ref{app:independentToricRealRootedness} gives a finite-convolution
proof of that corollary which does not use the fixed-row theorem.

Let $P$ be an $n$-dimensional simple polytope, and write
\[
f(P,t)=h_0+h_1t+\dotsb+h_nt^n
\]
for the recursively defined toric $h$-polynomial of its face lattice.  With
$m=\lfloor n/2\rfloor$, Stanley's \defin{toric $g$-polynomial} is
\begin{equation}\label{eq:toricGDefinition}
\defin{g(P,t)}\coloneqq
h_0+\sum_{i=1}^{m}(h_i-h_{i-1})t^i.
\end{equation}
The toric $h$-vector in this definition is distinct from the ordinary
$h$-vector used to define the $\gamma$-vector of a simple polytope; see
\cite[Section~2.1]{EhrenborgHetyeiReaddy2025}.

Let $\Cat_r=\frac{1}{r+1}\binom{2r}{r}$ be the $r$th Catalan number.  Ehrenborg,
Hetyei, and Readdy define the \defin{toric $g$-contribution polynomials}
\begin{equation}\label{eq:toricContribution}
\defin{g_{n,j}(t)}
\coloneqq
\sum_{k=0}^{\lfloor n/2\rfloor}
\Cat_{n-k-j}\binom{n-k}{k}(t-1)^k,
\qquad 0\leq j\leq\lfloor n/2\rfloor.
\end{equation}
If $(\gamma_0,\dotsc,\gamma_{\lfloor n/2\rfloor})$ is the ordinary
$\gamma$-vector of $P$, then
\begin{equation}\label{eq:EhrToricDecomposition}
g(P,t)=\sum_{j=0}^{\lfloor n/2\rfloor}\gamma_jg_{n,j}(t);
\end{equation}
see~\cite[Theorem~3.4]{EhrenborgHetyeiReaddy2025}.

We use one reciprocal normalization throughout this section.  For \(n\geq1\),
write
\[
n=2m+\epsilon,\qquad \epsilon\in\{0,1\},\qquad
c=\epsilon+\frac12,
\]
and use the reverse offset \(d=m-j\).  For \(0\leq d\leq m\), coefficient
comparison in~\eqref{eq:toricContribution} gives
\begin{equation}\label{eq:RdDefinition}
\defin{R_d(x)}
=\frac{(-1)^mx^m}{\binom{m+\epsilon}{m}\Cat_{\epsilon+d}}
g_{n,m-d}\left(1-\frac1x\right)
={}_3F_2\!\left(
\begin{matrix}-m,m+1+\epsilon,c+d\\
c,c+d+\frac32\end{matrix};x\right).
\end{equation}
In particular, \(R_d(0)=1\).  The change of variables \(t=1-1/x\) is
increasing on \((0,1]\), so it preserves root alternation.  We now prove the
theorem directly in these reciprocal coordinates.

\begin{theorem}\label{thm:toricContributionInterlacing}
Let \(n\geq2\) and \(m=\lfloor n/2\rfloor\).  The polynomial
\(g_{n,m}(t)/t\) has degree $m-1$ and satisfies
\(g_{n,m}(t)/t\lless g_{n,j}(t)\) for every
\(0\leq j\leq m\).  Moreover,
\[
\bigl(g_{n,0}(t),g_{n,1}(t),\dotsc,g_{n,m}(t)\bigr)
\]
is an interlacing sequence.  Consequently, every nonzero polynomial
\[
\sum_{j=0}^m\lambda_jg_{n,j}(t),\qquad \lambda_j\geq0,
\]
is real-rooted.
\end{theorem}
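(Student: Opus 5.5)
We work throughout in the reciprocal coordinates $R_d(x)$ of~\eqref{eq:RdDefinition}. The map $t=1-1/x$ is increasing, so interlacing statements transfer back to $t$ once degrees and leading signs are tracked.

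First we handle the degrees. From~\eqref{eq:toricContribution}, $g_{n,j}$ has degree $m$ with nonzero leading coefficient. The term $k=0$ of $g_{n,m}$ is $\Cat_{n-m}$, but the alternating sum makes $g_{n,m}(0)=0$. This follows from the Catalan identity $\sum_k(-1)^k\binom{n-k}{k}\Cat_{n-m-k}=0$, which we read off from the ${}_3F_2$ at $x\to\infty$, or check by Chu--Vandermonde. Hence $g_{n,m}(t)/t$ is a polynomial of degree $m-1$. In reciprocal coordinates, the root $t=0$ corresponds to $x=1$. Thus $R_m(x)$ has a factor $(1-x)$, and the common interlacer corresponds to $Q(x)=R_m(x)/(1-x)$, a hypergeometric polynomial of degree $m-1$.

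The core step is to prove that each $R_d$ is real-rooted with all roots in $(1,\infty)$, and that $Q$ interlaces every $R_d$. Our plan is a \emph{differential insertion triangle}. The parameters $c+d$ and $c+d+\tfrac32$ shift together with $d$, so we use contiguous relations of ${}_3F_2$ to express $R_{d+1}$ as a positive combination of $R_d$ and $xR_d'$ (or of $R_d$ and $R_{d-1}$). Concretely, $(\theta+c+d)$ acting on the series with $\theta=x\,d/dx$ raises the numerator parameter. Dividing by $(c+d+\tfrac32)$ corresponds to averaging against a beta density. So we write $R_d(x)=\int_0^1 R_{\mathrm{base}}(xs)\,\mu_d(ds)$, where $\mu_d$ is a positive beta measure with parameters $c+d$ and $\tfrac32$, and $R_{\mathrm{base}}={}_2F_1(-m,m+1+\epsilon;c;x)$ is a Jacobi polynomial. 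The Jacobi polynomial has $m$ simple roots in $(1,\infty)$ in the variable $x$.

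Interlacing of all $R_d$ with $Q$ then reduces to a sign condition: $R_d$ alternates in sign at the $m-1$ roots of $Q$, and it has the correct sign at the endpoint $x=1$ and at $\infty$. Fixing signs at $Q$'s roots uniformly in $d$ is exactly the \emph{boundary signed-moment argument}. We would express $R_d(\xi_i)$, with $\xi_i$ the $i$th root of $Q$, as a moment $\int_0^1 R_{\mathrm{base}}(\xi_is)\,\mu_d(ds)$. We then show that its sign is $(-1)^i$ for every $d$, using total positivity in $d$ of the beta kernels together with the variation-diminishing property: $s^{c+d-1}$ forms a TP family in $(d,s)$. This is the step we expect to be the main obstacle. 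Our first attempt would be to verify the sign at the extreme $d=m$, where it is trivial because $R_m=(1-x)Q$, and at $d=0$, and then use that the set of $d$ with the correct sign pattern is an interval by variation diminishing.

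Once the common interlacer $Q\lless R_d$ holds for every $d$, we obtain the fixed-row interlacing $g_{n,i}\lless g_{n,j}$ for $i<j$ as follows. Every pair of polynomials with a common interlacer has all nonnegative combinations real-rooted (the Chudnovsky--Seymour / Dedieu criterion). Then $g_{n,i}\lless g_{n,j}$ follows from the Hermite--Kakeya--Obreschkoff theorem, provided the orientation is fixed. We fix the orientation by comparing the signs of $g_{n,j}$ at the roots of $Q$ as $j$ varies, which is monotone by the same moment computation. Finally, nonnegative row sums are real-rooted because any nonnegative combination of polynomials sharing a common interlacer is real-rooted.
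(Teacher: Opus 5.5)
Your proposal has a genuine gap at exactly the step you flag, and the setup around that step is off. Under \eqref{eq:RdDefinition} the zeros lie in $(0,1]$, not in $(1,\infty)$. Your base polynomial $F={}_2F_1(-m,m+1+\epsilon;c;x)\doteq\JacP_m^{(c-1,1/2)}(1-2x)$ has its zeros in $(0,1)$. The factor $1-x$ belongs to $R_0$ (that is, $j=m$), not to $R_m$. In fact your beta representation, together with the orthogonality of $F$ for the weight $x^{c-1}(1-x)^{1/2}$, gives $R_d(1)=0$ for every $d<m$.

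Up to a positive factor, $R_\delta(\xi_i)=\int_0^1 s^{c+\delta-1}(1-s)^{1/2}F(\xi_i s)\,ds$. Variation diminishing for the kernel $s^{c+\delta-1}$ bounds the number of sign changes in $\delta$ only by the number of zeros of $F$ in $(0,\xi_i)$. That number is one for $i=1$ but grows with $i$: for $n=6$, $F$ has two zeros below $\xi_2$. So nothing rules out a sign change at an integer $1\le d\le m$, and nothing makes the set of good $d$ an interval. Your anchors also fail. The anchor you call trivial, $R=(1-x)Q$, is $R_0$, and there $R_0(\xi_i)=0$, so there is no strict sign to propagate. The other end, $d=m$, is the paper's exceptional offset, which needs the separate Euler-inverse family of Lemma~\ref{lem:boundaryEulerInverse}.

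The passage from the common interlacer to $g_{n,i}\lless g_{n,j}$ is also invalid. Hermite--Kakeya--Obreschkoff needs every real combination $\lambda f+\mu g$ to be real-rooted, whereas a common interlacer gives only the nonnegative ones. For example, $x(x-3)$ and $(x-1)(x-2)$ share the interlacer $x-\tfrac32$ but do not interlace. Signs at the zeros of $Q$ cannot decide the order, because all $R_d$ with $d\ge1$ take the same sign at each $\xi_k$. What is missing is a coordinatewise ordering of the zeros in $d$.

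The paper supplies this in three steps. It uses the differential triangle $T_{d,r}$ and the edge identity \eqref{eq:BezoutEdge}, which yield \eqref{eq:SdGapChain}. It closes the chain at $d=N$ by a same-degree Jacobi comparison of $S_N$ with $S_0$. It handles $d=m$ by proving with signed moments that every real pencil $\lambda E_{\gamma_1}+\mu E_{\gamma_2}$ is real-rooted, so that Obreschkoff--Dedieu legitimately applies. It then traps the zeros of $R_m$ between those of $\Phi$ and $R_{m-1}$.

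Your beta average is close in spirit to the paper's $E_\gamma=\gamma\int_0^1u^{\gamma-1}\Phi(ux)\,du$. The difference is that the paper exploits it through moments against test polynomials, not through pointwise signs.
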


\begin{proof}
The case \(m=1\) follows directly from~\eqref{eq:toricContribution}, so assume
\(m\geq2\).  We first compare all but the final contribution by a triangular
differential refinement.  The final contribution is a boundary Euler inverse
and requires the signed-moment argument for an arbitrary real pencil.  Retain
\(\epsilon,c\) from the normalization preceding~\eqref{eq:RdDefinition}, put
\(N=m-1\).  We use Euler's hypergeometric transformation
\[
{}_2F_1(a,b;q;x)
=(1-x)^{q-a-b}{}_2F_1(q-a,q-b;q;x).
\]
After cancellation at \(d=0\) in~\eqref{eq:RdDefinition}, take
\(a=-m\), \(b=m+1+\epsilon\), and \(q=2+\epsilon\).  The exponent in Euler's
transformation is then one, and the identity gives
\begin{equation}\label{eq:S0Definition}
\defin{S_0(x)}\coloneqq\frac{R_0(x)}{1-x}
={}_2F_1\!\left(
\begin{matrix}1-m,m+2+\epsilon\\2+\epsilon\end{matrix};x\right)
\doteq \JacP_N^{(c+1/2,1)}(1-2x),
\end{equation}
where \(\JacP_r^{(a,b)}\) is the Jacobi polynomial normalized by
\(\JacP_r^{(a,b)}(1)=\binom{r+a}{r}\), and \(\doteq\) denotes equality up to a
positive scalar.  In particular, \(S_0\) has \(N\) simple zeros in
\((0,1)\).  Moreover,~\eqref{eq:RdDefinition} gives
\[
S_0(x)=
\frac{(-1)^{m+1}x^{m-1}}
{\binom{m+\epsilon}{m}\Cat_\epsilon}
\frac{g_{n,m}(1-\frac1x)}{1-\frac1x}.
\]
Thus $R_0$, after removing its boundary factor $1-x$, plays exactly the
role of $g_{n,m}(t)/t$.  The interlacing assertion of the theorem is
equivalent to showing that $S_0$ interlaces every $R_d$.

\emph{Finite offsets.}
We first treat $0\leq d\leq N$.  The triangular refinement below starts row
$d$ with the $d$th derivative of $S_0$, of degree $N-d$.  Moving right
applies first-order operators, each of which inserts one zero in $(0,1)$, so
the diagonal again has degree $N$.  The resulting diagonal entry is a positive
multiple of $S_d$.  Comparing the horizontal and up-right maps at the right
boundary then orders the diagonal zeros, allowing us to compare every $S_d$
with $S_0$.

Put $\theta=x\,d/dx$ and
$(c+\theta)_r=(c+\theta+r-1)\dotsm(c+\theta)$.  Write
$R_d=(1-x)S_d$ and, for $0\leq r\leq d$, define
\begin{equation}\label{eq:TdrDefinition}
\defin{T_{d,r}(x)}
\coloneqq(-1)^d(1-x)^{-(d+1-r)}(c+\theta)_r
\left((1-x)^{d+1}S_0^{(d)}(x)\right).
\end{equation}
For example, the first three rows, when present, have the form
\[
\begin{array}{ccccc}
&&&& T_{0,0}=S_0\rule{0pt}{3ex}\\[-1ex]
&&&\nearrow&\\[-1ex]
&&T_{1,0}&\longrightarrow&T_{1,1}\doteq S_1\\[-1ex]
&\nearrow&&\nearrow&\\[-1ex]
T_{2,0}&\longrightarrow&T_{2,1}
&\longrightarrow&T_{2,2}\doteq S_2.
\end{array}
\]
The display is right-justified: $d$ is constant along a horizontal row,
$r=0$ is the left sloping boundary, and $r=d$ is the right vertical boundary.
For fixed $r$, the entries run up-right.  The two boundaries satisfy
$T_{d,0}=(-1)^dS_0^{(d)}$ and $T_{d,d}\doteq S_d$.  Each horizontal or
up-right arrow represents an interval-insertion operator: it raises the
degree by one, inserts one zero in $(0,1)$, and gives a strict interlacing
between its endpoints.  Thus every row and every fixed-$r$ up-right chain is
an adjacent interlacing chain.  The calculation below verifies the arrows and
the right-boundary identification; the subsequent boundary comparison orders
the equal-degree polynomials on that right boundary.

Commuting one Euler factor through the power of $1-x$ gives
\begin{equation}\label{eq:TdrHorizontal}
T_{d,r+1}
=x(1-x)T_{d,r}'+\bigl(c+r-(c+d+1)x\bigr)T_{d,r}.
\end{equation}
If \(f(0)>0\) has simple zeros \(0<r_1<\dotsb<r_s<1\), then
\begin{equation}\label{eq:intervalInsertionOperator}
\defin{\mathcal M_{a,b}f}\coloneqq x(1-x)f'+(a-bx)f,
\qquad a>0,\quad b-a>0,
\end{equation}
has one simple zero in each of
\((0,r_1),(r_1,r_2),\dotsc,(r_s,1)\).  Indeed, its signs at \(0\), at the
zeros of \(f\), and at \(1\) alternate, and its degree is \(s+1\).
Rolle's theorem first gives $T_{d,0}=(-1)^dS_0^{(d)}$ exactly $N-d$ simple
zeros in $(0,1)$, while~\eqref{eq:S0Definition} gives
$T_{d,0}(0)>0$.  Applying~\eqref{eq:TdrHorizontal} successively
then shows that every $T_{d,r}$ has simple zeros in $(0,1)$ and
is positive at zero.

To identify the diagonal, differentiation of~\eqref{eq:S0Definition},
followed by Euler's transformation, gives
\[
(-1)^d(1-x)^{d+1}S_0^{(d)}(x)
\doteq {}_2F_1\!\left(
\begin{matrix}-m,m+1+\epsilon\\\epsilon+2+d\end{matrix};x\right).
\]
Indeed, the omitted factor is
\[
(-1)^d\frac{(1-m)_d(m+\epsilon+2)_d}{(\epsilon+2)_d}>0.
\]
On the coefficient of $x^k$, the normalized Euler operator contributes
\[
\frac{(c+k)_d}{(c)_d}=\frac{(c+d)_k}{(c)_k}.
\]
Comparing with~\eqref{eq:RdDefinition} now gives the only normalization that
we need:
\[
T_{d,d}\doteq S_d.
\]

It remains to compare the diagonals of this triangular array.  For \(d<N\),
set
\[
\Lambda_d=(N-d)\left(N+d+c+\frac52\right),\quad
A_d=c+d+\frac32,\quad B_{d,r}=c+2d+\frac72-r.
\]
The Jacobi differential equation and the intertwining identity
\[
\mathcal M_{a,b}\mathcal M_{A,B}
=\mathcal M_{A,B-1}\mathcal M_{a,b+1},
\]
valid when $A+b+1=a+B$, with
$a=c+r$, $b=c+d+1$, $A=c+d+\frac32$, and
$B=c+2d+\frac72-r$, give the vertical relation
\begin{equation}\label{eq:TdrVertical}
T_{d,r}=\frac1{\Lambda_d}
\left(x(1-x)T_{d+1,r}'+(A_d-B_{d,r}x)T_{d+1,r}\right).
\end{equation}
Since $\Lambda_d>0$ and $B_{d,r}-A_d=d+2-r>0$, this verifies the up-right
arrows in the diagram.

At the right boundary $r=d$ of the triangular array, put
\[
H=T_{d+1,d}.
\]
The vertical and horizontal relations give
\[
\Lambda_dT_{d,d}=\mathcal M_{c+d+3/2,c+d+7/2}H,
\qquad
T_{d+1,d+1}=\mathcal M_{c+d,c+d+2}H.
\]
Subtracting the first identity from the second gives
\begin{equation}\label{eq:BezoutEdge}
T_{d+1,d+1}-\Lambda_dT_{d,d}=\frac32(x-1)H.
\end{equation}
Both polynomials on the left are interval-insertion transforms of \(H\).  At
the left endpoint of each gap of \(H\), the polynomial \(T_{d+1,d+1}\) has
the sign taken by \(H\) inside the gap.  Indeed, this follows from
$T_{d+1,d+1}(0)=(c+d)H(0)$ for the first gap and from
$T_{d+1,d+1}(\rho)=\rho(1-\rho)H'(\rho)$ at a zero $\rho$ of $H$.  At a zero
$a$ of $T_{d,d}$, equation~\eqref{eq:BezoutEdge} gives
$T_{d+1,d+1}(a)=\frac32(a-1)H(a)$, which has the opposite sign.  The unique
zero of $T_{d+1,d+1}$ in that gap therefore lies to the left of $a$.  Thus, if
\(s_{d,1}<\dotsb<s_{d,N}\) are the zeros of \(S_d\), then
\begin{equation}\label{eq:SdGapChain}
s_{d+1,i}<s_{d,i}<s_{d+1,i+1}\quad(1\leq i<N),
\qquad s_{d+1,N}<s_{d,N}.
\end{equation}
At the other endpoint,
\[
S_N\doteq \JacP_N^{(c-1,1)}(1-2x),\qquad
S_0\doteq \JacP_N^{(c+1/2,1)}(1-2x).
\]
The same-degree Jacobi comparison in
\cite[Theorem~2.4]{DriverJordaanMbuyi2008}, after using Jacobi symmetry,
applies because the parameter gap is \(3/2<2\).  Thus, if
$\xi_1<\dotsb<\xi_N$ are the zeros of $S_0$, then
\[
s_{N,i}<\xi_i<s_{N,i+1}\quad(1\leq i<N),
\qquad s_{N,N}<\xi_N<1.
\]
Starting from $s_{0,i}=\xi_i$ and using~\eqref{eq:SdGapChain} forward from
$d=0$ and backward from $d=N$ now gives
\[
s_{d,i}\leq\xi_i\leq s_{d,i+1}\quad(1\leq i<N),
\qquad s_{d,N}\leq\xi_N<1,
\]
for $0\leq d\leq N$.  Hence $S_0$ interlaces every
$R_d=(1-x)S_d$ with $d<m$.

\emph{The exceptional offset.}
The remaining value $d=m$ uses the continuous family $E_\gamma$ and the
signed-moment and endpoint identities
\eqref{eq:boundarySignedMoment}--\eqref{eq:boundaryEulerEndpoint} from
Lemma~\ref{lem:boundaryEulerInverse}, which is stated and proved later in
Appendix~\ref{app:boundaryEulerInverse}.  Put
\[
\alpha=c+m-1,\qquad \beta=\alpha+\frac32,
\]
and set
\[
\defin{\Phi(x)}\coloneqq{}_2F_1(-m,c+m;c;x).
\]
A coefficient comparison gives
\begin{equation}\label{eq:EulerInverseEndpoints}
E_\alpha=R_{m-1}=(1-x)S_N,\qquad E_\beta=R_m.
\end{equation}

The coefficient identity
\[
(\theta+\gamma)E_\gamma=\gamma\Phi
\]
explains the reduction to a common Euler-inverse family.  This inverse
differential relation does not by itself compare the roots at the two
parameters \(\alpha\) and \(\beta\).  The signed moments in the boundary lemma
supply that comparison.

For every $\gamma>\alpha$, these identities apply to $E_\gamma$.  Fix
$\gamma_1,\gamma_2>\alpha$.  The case $\gamma_1=\gamma_2$ is immediate, so
assume $\gamma_1<\gamma_2$.  For the real pencil
$Q=\lambda E_{\gamma_1}+\mu E_{\gamma_2}$, put
\[
K(u)=\lambda E_{\gamma_1}(1)u^{c-\gamma_1-1}
+\mu E_{\gamma_2}(1)u^{c-\gamma_2-1}.
\]
Summing~\eqref{eq:boundarySignedMoment} for $\gamma_1$ and $\gamma_2$ gives
\begin{equation}\label{eq:EulerPencilMoment}
\int_0^1x^{c-1}Q(x)p(x)\,dx
=-\int_1^\infty p(u)K(u)\,du
\qquad (\deg p<m).
\end{equation}
The ratio of the two power functions is strictly monotone, so $K$ changes
sign at most once on $(1,\infty)$.  First suppose that $Q(0)>0$ and
$Q(1)\neq0$.  Let $r_1,\dotsc,r_s$ be its sign-changing zeros in $(0,1)$,
and put $p_0(x)=(-1)^s\prod_i(x-r_i)$.  Then $Qp_0\geq0$ on $(0,1)$ and is
positive away from the zeros of $Q$.  Moreover, $K(1)=Q(1)$ has sign
$(-1)^s$, whereas $p_0$ has sign $(-1)^s$ on $(1,\infty)$.  If $K$ has no
sign change, using $p_0$ in~\eqref{eq:EulerPencilMoment} is a contradiction
when $s<m$.  If it changes sign at $\kappa>1$, then
$p_0(u)(\kappa-u)K(u)>0$ away from $\kappa$; using
$p_0(x)(\kappa-x)$ is a contradiction when $s+1<m$.  Thus, under these
generic endpoint assumptions, $Q$ has at least $m-1$ distinct real zeros in
$(0,1)$.

Furthermore,
\[
\frac{[x^m]E_\gamma}{[x^{m-1}]E_\gamma}
=-\frac{c+2m-1}{m(c+m-1)}
\frac{\gamma+m-1}{\gamma+m}
\]
is strictly monotone in \(\gamma\).  Hence a nonzero pencil member has degree
\(m\) or \(m-1\).  Together with the preceding root count, this makes $Q$
real-rooted under the same generic assumptions: in degree \(m\), a nonreal
remaining root would bring its conjugate.  The case $Q(0)<0$ reduces by
scaling.  The cases $Q(0)=0$ or $Q(1)=0$, including the possible degree drop,
follow by perturbation and coefficientwise closedness of real-rooted
polynomials of degree at most $m$.  Thus every nonzero real pencil member is
real-rooted.  The Obreschkoff--Dedieu theorem, in the form recalled
in~\cite[Section~3]{ChudnovskySeymour2007}, now shows that any two
\(E_\gamma\) interlace.

Let \(r_1(\gamma)<\dotsb<r_m(\gamma)\) be the zeros of \(E_\gamma\).  Since
\(E_\gamma(0)=1\),
\begin{equation}\label{eq:EulerRootOrientation}
\sum_{i=1}^m\frac1{r_i(\gamma)}
=\frac{m(c+m)}c\frac{\gamma}{\gamma+1},
\end{equation}
which is strictly increasing.  This fixes the interlacing orientation: the
zeros move to the left as \(\gamma\) increases, since the two possible
alternating orders give opposite inequalities between the sums of reciprocal
roots.  Index the zeros \(\varphi_i,r_i,a_i\) of
\(\Phi,E_\beta,E_\alpha\), respectively, in increasing order.  The
coefficientwise limits
\(E_\gamma\longrightarrow\Phi\) as \(\gamma\longrightarrow\infty\) and
\(E_\gamma\longrightarrow E_\alpha\) as \(\gamma\downarrow\alpha\) give
\begin{equation}\label{eq:EulerRootTrap}
\varphi_i\leq r_i\leq a_i.
\end{equation}
The Jacobi forms of \(\Phi\) and \(S_0\) have degrees \(m\) and \(m-1\) and
parameter pairs \((c-1,0)\) and \((c+1/2,1)\), respectively.  For the zeros
\(\xi_1<\dotsb<\xi_N\) introduced above,
\cite[Theorem~2.3]{DriverJordaanMbuyi2008} gives
\[
\varphi_i<\xi_i<\varphi_{i+1}\qquad(1\leq i<m).
\]
The same-degree endpoint comparison above gives
\(a_i<\xi_i<a_{i+1}\), where \(a_m=1\).  Hence
\[
r_i\leq a_i<\xi_i<\varphi_{i+1}\leq r_{i+1}
\qquad(1\leq i<m),
\]
so \(S_0\) also interlaces \(R_m\).

We finally record the orientations among the $R_d$.  For
$0\leq d<e<m$, repeated use of~\eqref{eq:SdGapChain} gives
$s_{e,i}<s_{d,i}$, while the common comparison with the zeros of $S_0$ gives
\[
s_{d,i}\leq\xi_i\leq s_{e,i+1}\qquad(1\leq i<N).
\]
Together with the common root $1$ of the $R_d$ for $d<m$, these inequalities
give the oriented alternating root pattern with the zeros of $R_e$ first.

For the exceptional offset, let $u_1<\dotsb<u_m=1$ be the zeros of $R_d$,
where $d<m$.  The preceding coordinatewise ordering and
\eqref{eq:EulerRootTrap} give $r_i\leq a_i\leq u_i$.  The two Jacobi
comparisons above give
\[
u_i\leq\xi_i<\varphi_{i+1}\leq r_{i+1}\qquad(1\leq i<m),
\]
and $r_m<1=u_m$.  Hence the zeros of $R_m$ and $R_d$ have the same oriented
alternating pattern for every $d<m$.

The reciprocal identities above and~\eqref{eq:RdDefinition}, together with
the monotonicity of $t=1-1/x$ and the relation $j=m-d$, now translate the
common-interlacer statement and these oriented patterns into
\[
\frac{g_{n,m}(t)}t\lless g_{n,j}(t)\quad(0\leq j\leq m),
\qquad
g_{n,i}(t)\lless g_{n,j}(t)\quad(0\leq i<j\leq m).
\]
The $t^m$-coefficient of $g_{n,j}$ is
$\Cat_{m+\epsilon-j}\binom{m+\epsilon}{m}>0$.  Hence every nonnegative
linear combination is real-rooted; see
\cite[Statement~3.6]{ChudnovskySeymour2007}.  This completes the proof.
\end{proof}

\begin{corollary}\label{cor:toricContributionRealRooted}
For every \(n\geq1\) and
\(0\leq j\leq\lfloor n/2\rfloor\), the toric contribution
\(g_{n,j}(t)\) is real-rooted.
\end{corollary}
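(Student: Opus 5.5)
The plan is to read this off from Theorem~\ref{thm:toricContributionInterlacing}, after splitting by $n$. The theorem covers $n\geq2$. The case $n=1$ has to be checked directly from~\eqref{eq:toricContribution}.

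\emph{The case $n=1$.} Here $\lfloor n/2\rfloor=0$, so only $j=0$ occurs and the sum has the single term $k=0$. This gives $g_{1,0}(t)=\Cat_1=1$, a positive constant. It is real-rooted in the convention used throughout the paper, where positive constants have empty zero sets.

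\emph{The case $n\geq2$.} Fix $j$ with $0\leq j\leq m=\lfloor n/2\rfloor$. In the nonnegative combination of Theorem~\ref{thm:toricContributionInterlacing}, take $\lambda_j=1$ and $\lambda_i=0$ for $i\neq j$. That combination is exactly $g_{n,j}(t)$, and the theorem asserts it is real-rooted once we know it is nonzero.

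Nonvanishing follows from the coefficient computation recorded at the end of that proof: the $t^m$-coefficient of $g_{n,j}$ is $\Cat_{m+\epsilon-j}\binom{m+\epsilon}{m}>0$. One can also get this from the relation $g_{n,m}(t)/t\lless g_{n,j}(t)$, which by definition requires $g_{n,j}$ to be a real-rooted polynomial of degree $m$.

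There is no real obstacle here. The only point needing care is the degenerate case $n=1$, which lies outside the hypothesis $n\geq2$ of the theorem and is handled by the direct evaluation above.
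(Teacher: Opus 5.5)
Your proposal is correct and matches the paper's proof. For $n\geq2$ you specialize Theorem~\ref{thm:toricContributionInterlacing}, either through a single nonzero coefficient $\lambda_j$ or directly through $g_{n,m}(t)/t\lless g_{n,j}(t)$. For $n=1$ you evaluate~\eqref{eq:toricContribution} directly to get the positive constant $g_{1,0}(t)=\Cat_1=1$.
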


\begin{proof}
For \(n\geq2\), this follows from
Theorem~\ref{thm:toricContributionInterlacing}; the case \(n=1\) is immediate
from~\eqref{eq:toricContribution}.
\end{proof}

\begin{remark}\label{rem:XiaoComparison}
Xiao proved
Corollary~\ref{cor:toricContributionRealRooted} by a differential three-term
recurrence derived from the Catalan generating function, initial boundary
interlacing, and a Liu--Wang-type induction.  Xiao also proves
$g_{n,j}\lless g_{n+1,j}$ and $g_{n,j}\lless g_{n+1,j+1}$ in the common
orientation of Xiao's notation and ours~\cite[Theorem~1.3]{Xiao2026}.
Appendix~\ref{app:independentToricRealRootedness} gives a separate proof of
the corollary by finite multiplicative convolution and the boundary
Euler-inverse lemma.

Xiao defines an interlacing sequence by the same oriented all-pairs condition
used in Section~\ref{sec:interlacing}.  Thus the fixed-row assertion of
Theorem~\ref{thm:toricContributionInterlacing}, together with the immediate
cases $n=0,1$, proves Conjecture~4.2 of~\cite{Xiao2026}, which Xiao formulated
at the suggestion of C.~Athanasiadis.  Xiao observed that this conjecture
would imply Conjecture~11.2 of Ehrenborg--Hetyei--Readdy
\cite[Section~4]{Xiao2026}.  By
\cite[Statement~3.6]{ChudnovskySeymour2007}, the common interlacer and the
positive leading coefficients directly give compatibility, hence
real-rootedness of every nonnegative row sum.  They do not by themselves
determine the oriented order of each pair; that is the extra conclusion of
the root-order inequalities in the proof of
Theorem~\ref{thm:toricContributionInterlacing}.
\end{remark}

Following Ehrenborg, Hetyei, and Readdy, let \(\PF_n(123)\) be the set of
parking functions having no \(i<j<k\) with
\(w_i\leq w_j\leq w_k\), and let
\(\defin{\wasc(\wvec)}\coloneqq|\{i:w_i\leq w_{i+1}\}|\).

\begin{corollary}\label{cor:avoidingParkingRealRooted}
Conjecture~11.2 of Ehrenborg--Hetyei--Readdy holds.  Moreover, for every
\(n\geq1\), both
\[
\defin{A_n(t)}\coloneqq
\sum_{\wvec\in\PF_n(123)}t^{\wasc(\wvec)}
\qquad\text{and}\qquad
\sum_{\wvec\in\PF_n(123)}t^{\des(\wvec)}
\]
are real-rooted, where 123-avoidance and ascents are weak.
\end{corollary}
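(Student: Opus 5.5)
The plan is to reduce both polynomials to nonnegative row sums of toric contributions and then apply Theorem~\ref{thm:toricContributionInterlacing}. Conjecture~11.2 of Ehrenborg--Hetyei--Readdy asserts real-rootedness of $g(P,t)$ for simple polytopes with nonnegative $\gamma$-vector. First, I would note that by~\eqref{eq:EhrToricDecomposition} such a $g(P,t)$ equals $\sum_j\gamma_jg_{n,j}(t)$ with $\gamma_j\geq0$. It is nonzero because $\gamma_0=1$, and $g_{n,0}$ has positive leading coefficient. Hence for $n\geq2$ the final clause of Theorem~\ref{thm:toricContributionInterlacing} gives real-rootedness. For $n\leq1$ the polynomial has degree at most zero, so the claim is trivial. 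This settles the conjecture and is the route already indicated in Remark~\ref{rem:XiaoComparison}.

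For $A_n(t)$, I would invoke the associahedral identity of Ehrenborg--Hetyei--Readdy. In their paper, the weak-ascent polynomial of $\PF_n(123)$ is expressed, up to a normalization, through the toric $g$-polynomial of the $n$-dimensional (simple) associahedron, or of its dual, possibly with a reciprocal or shift in $t$. The associahedron has nonnegative $\gamma$-vector; its $\gamma$-numbers count certain Dyck paths and are positive by Postnikov--Reiner--Williams. So the first part shows that the relevant toric $g$-polynomial is real-rooted. I would then check that the transformation linking it to $A_n(t)$ preserves real-rootedness. Multiplication by a power of $t$, reversal $t^Df(1/t)$, or multiplication by a positive scalar all preserve real-rootedness, which is why I expect this identity to be of that shape. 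Pinning down the exact form of their identity, including the dimension index, is the main obstacle. It is a bookkeeping step rather than a new argument, but it requires care with the range $n$ versus $n-1$ and with small $n$, which I would verify directly.

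For the descent polynomial, I would use the complementary relation between weak ascents and strict descents. Every position $i\in[n-1]$ is either a weak ascent ($w_i\leq w_{i+1}$) or a strict descent. Hence $\des(\wvec)=n-1-\wasc(\wvec)$, and
\[
\sum_{\wvec\in\PF_n(123)}t^{\des(\wvec)}=t^{n-1}A_n(1/t).
\]
This reciprocal has the inverse zeros of $A_n$. A zero at the origin of $A_n$ only lowers the degree, so the reciprocal is real-rooted once $A_n(t)$ is. Small cases such as $n=1$, where both polynomials equal $1$, are immediate.
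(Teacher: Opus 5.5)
Your argument follows the paper's proof step for step. You use \eqref{eq:EhrToricDecomposition} with the nonnegative-sum clause of Theorem~\ref{thm:toricContributionInterlacing}, then the associahedral identity, then $\des=n-1-\wasc$ with reciprocal reversal.

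The identity you were unsure about is exact, with no shift, reversal, or change of dimension. Ehrenborg--Hetyei--Readdy (Proposition~6.1 and Theorem~6.2) identify the toric $g$-polynomial of the $n$-dimensional associahedron with $A_n(t)$ itself. Its $\gamma$-vector is $\gamma_j=\Cat_j\binom{n}{2j}\geq0$.

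You should add one step to the first claim. Judging from the paper's proof, Conjecture~11.2 does not concern all $\gamma$-nonnegative simple polytopes; it concerns specific families. The paper gets it from the general statement only after citing that the cyclohedron and every chordal nestohedron have nonnegative $\gamma$-vectors (Ehrenborg--Hetyei--Readdy, Proposition~7.1 and Theorem~10.4). Without that input, your ``this settles the conjecture'' is not yet justified.
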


\begin{proof}
The case \(n=1\) is immediate.  For \(n\geq2\),
Theorem~\ref{thm:toricContributionInterlacing} proves that the toric
\(g\)-polynomial of every simple polytope with nonnegative \(\gamma\)-vector
is real-rooted by~\eqref{eq:EhrToricDecomposition}.  The cyclohedron and every
chordal nestohedron have nonnegative \(\gamma\)-vectors by
\cite[Proposition~7.1 and Theorem~10.4]{EhrenborgHetyeiReaddy2025}, so this
gives Conjecture~11.2.

For the associahedron,
\(\gamma_j=\Cat_j\binom{n}{2j}\geq0\), and Ehrenborg, Hetyei, and Readdy
identify its toric $g$-polynomial with $A_n(t)$; see
\cite[Proposition~6.1 and Theorem~6.2]{EhrenborgHetyeiReaddy2025}.  Finally,
$\wasc(\wvec)+\des(\wvec)=n-1$, and therefore the strict-descent polynomial
is $t^{n-1}A_n(1/t)$.  Reciprocal reversal and the possible insertion of
zeros at the origin preserve real-rootedness.  The claim follows.
\end{proof}

The fixed-row interlacing statement has been checked in Lean in the companion
\texttt{RealRooted} library; see
\href{https://github.com/PerAlexandersson/RealRooted/blob/%
acd0ec31118a155b083c8dd45af2015492ce0c10/RealRooted/ParkingFunctions/%
ToricContribution/ContributionReversal.lean\#L420}
{the formalized theorem} in commit \texttt{acd0ec3}.  The same development
checks the common-interlacer theorem and its nonnegative-sum consequence,
including the exceptional Euler-inverse pencil, and proves the coefficient
reversal between the EHR contributions and the normalized hypergeometric
polynomials used in the proof.

\section{Further real-rootedness results}
\label{sec:furtherRealRootedness}

We collect two short consequences concerning other parking-function
statistics.  Both are independent of the preceding toric and Smirnov-word
arguments.

\subsection{Image size}

For $n\geq1$, define the image-size polynomial
\[
\defin{M_n(t)}
\coloneqq
\sum_{\wvec\in\PF_n}t^{|\operatorname{im}(\wvec)|-1}.
\]
Its coefficients form displayed row $n$ of
OEIS~\oeis{A141618}~\cite{OEIS2026}.

We write
$\defin{\left\{\begin{matrix}n\\k\end{matrix}\right\}}$
for the Stirling number of the second kind, the number of partitions of
$[n]$ into $k$ nonempty blocks.

\begin{theorem}\label{thm:imageSizeRealRooted}
For every $n\geq1$,
\begin{equation}\label{eq:imageSizeCoefficients}
M_n(t)
=\sum_{j=0}^{n-1}
\binom{n}{j}
\left\{\begin{matrix}n\\j+1\end{matrix}\right\}j!\,t^j.
\end{equation}
Moreover, $M_n(t)$ has $n-1$ simple zeros, all of which are negative.
\end{theorem}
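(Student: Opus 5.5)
Two steps: a counting argument via Pollak's cyclic action gives \eqref{eq:imageSizeCoefficients}, and an occupancy recurrence gives the zeros. The intro already advertises exactly this pairing.

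\emph{Coefficient formula.} Consider words in $(\mathbb Z/(n+1))^n$. The diagonal action of $\mathbb Z/(n+1)$ adds a constant to every letter. Each orbit has size $n+1$ and contains exactly one parking function (Pollak's argument: $n$ cars, $n+1$ circular spots, the empty spot determines the shift). Translation preserves image size, so
\[
M_n(t)=\frac1{n+1}\sum_{\wvec\in[n+1]^n}t^{|\operatorname{im}\wvec|-1}.
\]
A word with image size $k=j+1$ is determined by a choice of $k$ letters from $n+1$ and a surjection from $[n]$ onto them, giving $\binom{n+1}{j+1}(j+1)!\left\{\begin{matrix}n\\j+1\end{matrix}\right\}$ words. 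Dividing by $n+1$ and using $\frac{1}{n+1}\binom{n+1}{j+1}(j+1)!=\binom{n}{j}j!$ yields \eqref{eq:imageSizeCoefficients}.

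\emph{Real zeros.} Set $W_{n,N}(t)=\sum_{\wvec\in[N]^n}t^{|\operatorname{im}\wvec|}$, so that $M_n(t)=\frac{1}{(n+1)t}W_{n,n+1}(t)$. Removing the last letter gives the occupancy recurrence: the new letter either repeats an already used letter or is one of the $N-|\operatorname{im}|$ unused letters. In operator form,
\[
W_{n+1,N}=tW_{n,N}'+t\bigl(NW_{n,N}-tW_{n,N}'\bigr)=NtW_{n,N}+t(1-t)W_{n,N}',\qquad W_{0,N}=1.
\]

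The main step is to show by induction that for $n\le N$, $W_{n,N}$ equals $t$ times a polynomial of degree $n-1$ with $n-1$ simple zeros in $(-\infty,0)$. Write $W=tV$ where $V$ has positive coefficients and simple negative zeros $\rho_1<\dots<\rho_{n-1}$. Then
\[
W_{n+1,N}=t\bigl[(N+1-t)\,tV'(t)+\bigl((N-1)t+1\bigr)V(t)\bigr]\text{ up to rearrangement}.
\]
To check the sign pattern, evaluate the bracket at each $\rho_i$. It equals $\rho_i(1-\rho_i)V'(\rho_i)$, and since $\rho_i<0$ and $1-\rho_i>0$, these values alternate in sign. At $t=0$ the bracket equals $V(0)>0$, which gives one zero between $\rho_{n-1}$ and $0$.

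The obstacle is locating the last zero to the left of $\rho_1$. The leading coefficient of the bracket is $(N-n)\cdot\mathrm{lc}(V)$, which is positive exactly when $n<N$. The bracket therefore has degree $n$, and the sign at $-\infty$ forces one more zero left of $\rho_1$. This is the only place the hypothesis $n\le N$ enters, and it is satisfied for $N=n+1$.

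That yields $n$ simple negative zeros at each step. In particular $M_n$, which is $V$ for $W_{n,n+1}$, has degree $n-1$ with $n-1$ simple negative zeros. The base case is $V=1$, and positivity of the coefficients is preserved by the recurrence.
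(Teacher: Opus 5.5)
Your proof is correct and follows essentially the paper's route: Pollak's cyclic action transfers $M_n$ to all words in $[n+1]^n$, occupancy counting gives the coefficients, and the zeros come from the append-a-letter recurrence with sign alternation at the old zeros. The only difference is cosmetic: the paper substitutes $t=x/(1+x)$, which turns your sign at $-\infty$ (leading coefficient $(N-n)\,\mathrm{lc}(V)$) into the endpoint evaluation $\widehat U_{r+1,\ell}(-1)=(r-\ell)\widehat U_{r,\ell}(-1)$ on $(-1,0)$; note also one slip in your display, where the bracket should read $\bigl(1+(N-1)t\bigr)V+t(1-t)V'$ rather than contain $(N+1-t)tV'$, and this corrected form is the one your evaluations at $\rho_i$, at $0$, and of the leading coefficient actually use.
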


\begin{proof}
For integers $\ell>r\geq1$, put
\[
\defin{U_{r,\ell}(t)}
\coloneqq\frac{1}{\ell}\sum_{\wvec\in[\ell]^r}
t^{|\operatorname{im}(\wvec)|-1}.
\]
Partitioning the positions into equality fibers and then assigning distinct
alphabet letters gives
\begin{equation}\label{eq:occupancyPolynomial}
U_{r,\ell}(t)
=\sum_{j=0}^{r-1}
(\ell-1)_{\underline{j}}
\left\{\begin{matrix}r\\j+1\end{matrix}\right\}t^j.
\end{equation}
Here
$\defin{(a)_{\underline j}}\coloneqq a(a-1)\dotsm(a-j+1)$.
Pollak's cyclic action adds a common residue modulo $n+1$ to every letter of
a word in $[n+1]^n$.  Every orbit contains a unique parking function, and
image size is constant on orbits; see also
\cite[Section~4]{DiaconisHicks2017}.  Hence
\begin{equation}\label{eq:PollakImageTransfer}
M_n(t)=U_{n,n+1}(t).
\end{equation}
Equation~\eqref{eq:imageSizeCoefficients} now follows from
\eqref{eq:occupancyPolynomial}.

It remains to locate the zeros.  Set
\[
\defin{\widehat U_{r,\ell}(x)}
\coloneqq(1+x)^{r-1}U_{r,\ell}\left(\frac{x}{1+x}\right).
\]
Writing again $\theta=x\,d/dx$, the occupancy expansion is equivalently
\[
\widehat U_{r,\ell}(x)
=\frac{\theta^r(1+x)^\ell}
{\ell x(1+x)^{\ell-r}}.
\]
Applying $\theta$ once more gives, whenever $\ell>r+1$,
\begin{equation}\label{eq:occupancySturmRecurrence}
\widehat U_{r+1,\ell}(x)
=\bigl(1+(\ell-r+1)x\bigr)\widehat U_{r,\ell}(x)
+x(1+x)\widehat U_{r,\ell}'(x).
\end{equation}
We claim by induction that $\widehat U_{r,\ell}$ has $r-1$ simple zeros in
$(-1,0)$.  The assertion starts with $\widehat U_{1,\ell}=1$.  At a zero
$\rho\in(-1,0)$ of $\widehat U_{r,\ell}$, the derivative term in
\eqref{eq:occupancySturmRecurrence} has sign opposite to
$\widehat U_{r,\ell}'(\rho)$.  These signs alternate.  At the endpoints,
\[
\widehat U_{r+1,\ell}(0)=1,
\qquad
\widehat U_{r+1,\ell}(-1)=(r-\ell)\widehat U_{r,\ell}(-1).
\]
Thus one new zero lies in each interval cut out by the old zeros and the two
endpoints.  The transformed version of~\eqref{eq:occupancyPolynomial} has
positive coefficients and degree $r$, so these are all the zeros and they are
simple.

For fixed $n$, apply this induction with $\ell=n+1$ up to $r=n$.
The increasing bijection $x\mapsto x/(1+x)$ sends $(-1,0)$ onto
$(-\infty,0)$, so~\eqref{eq:PollakImageTransfer} transports the claimed
zeros to $M_n$.
\end{proof}

\subsection{Two-pattern avoidance in the labeled-Dyck model}

We next use a notion of pattern avoidance different from the weak word
avoidance in Section~\ref{sec:toricContributions}.  For
$\wvec=(w_1,\dotsc,w_n)\in\PF_n$, let
\[
\defin{B_i(\wvec)}\coloneqq\{j\in[n]:w_j=i\}.
\]
List the elements of $B_1(\wvec)$ increasingly, then those of
$B_2(\wvec)$, and so on, omitting empty blocks.  The resulting permutation
of $[n]$ is denoted by $\defin{\phi(\wvec)}$.  Equivalently, $\phi(\wvec)$ is
obtained by reading the north-step labels in the labeled-Dyck-path model of
$\wvec$.  Following Adeniran and Pudwell~\cite{AdeniranPudwell2023}, we say
that $\wvec$ avoids a permutation pattern if $\phi(\wvec)$ does.

Let
\[
\defin{\mathscr P}
\coloneqq\bigl\{\{132,213\},\{132,312\},
          \{213,231\},\{231,312\}\bigr\}.
\]
For $r\geq0$, define
\begin{equation}\label{eq:polygonDissectionPolynomial}
\defin{V_r(t)}
\coloneqq
\sum_{k=0}^{r}
\frac{1}{k+1}\binom{r}{k}\binom{r+k+2}{k}t^k.
\end{equation}
These are the row polynomials of OEIS~\oeis{A033282}, with displayed row
$r+3$.

\begin{theorem}\label{thm:twoPatternRealRooted}
For every $n\geq1$ and every $\Pi\in\mathscr P$,
\begin{align}
\sum_{\substack{\wvec\in\PF_n\\
                  \phi(\wvec)\text{ avoids }\Pi}}
t^{\asc(\phi(\wvec))}
&=V_{n-1}(t),
\label{eq:twoPatternAscents}\\
\sum_{\substack{\wvec\in\PF_n\\
                  \phi(\wvec)\text{ avoids }\Pi}}
t^{\des(\phi(\wvec))}
&=t^{n-1}V_{n-1}(1/t).
\label{eq:twoPatternDescents}
\end{align}
The coefficients in~\eqref{eq:twoPatternAscents} form displayed row $n+2$
of OEIS~\oeis{A033282}, while those in~\eqref{eq:twoPatternDescents} form
displayed row $n$ of OEIS~\oeis{A126216}.  Both polynomial families have
simple negative zeros, and their consecutive rows strictly interlace.
\end{theorem}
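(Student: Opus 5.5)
The plan has three stages: a bijective reduction, an enumeration, and a root-location argument.

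\emph{Reduction to permutations.} The map $\wvec\mapsto(\phi(\wvec),\text{Dyck path})$ is the standard bijection between parking functions and Dyck paths with north steps labeled so that labels increase within each vertical run. Thus $\phi(\wvec)=\sigma$ occurs together with a Dyck path $D$ exactly when every descent of $\sigma$ sits at the boundary between two vertical runs of $D$. Equivalently, the composition of $n$ given by the run lengths of $D$ must refine the ascent-run composition of $\sigma$. For fixed $\sigma$, the number of parking functions with $\phi(\wvec)=\sigma$ is therefore the number of Dyck paths whose set of run-boundary positions contains $\Des(\sigma)$. Both sides of \eqref{eq:twoPatternAscents} then become a sum over $\Pi$-avoiding permutations $\sigma$, weighted by $t^{\asc(\sigma)}$ times this Dyck count.

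\emph{Enumeration.} Each class in $\mathscr P$ is a classical Simion--Schmidt class with $2^{n-1}$ elements, determined by a binary choice at each step. For example, $\{132,213\}$-avoiders are the reverse-layered permutations, decreasing sequences of increasing blocks. The four classes are related by reverse-complement and inverse symmetries, but $\phi$ does not commute with all of these symmetries, so I would handle one representative per symmetry pair directly. For $\{231,312\}$, the layered permutations, $\sigma$ is a concatenation of decreasing blocks. Its ascents are the block boundaries, and its descents are all positions inside blocks. Requiring a run boundary of $D$ at every position inside a block, with ascent weight $t$ at each block boundary, converts the sum into a generating function for Dyck paths. In that generating function, each vertical run either continues through a block boundary (weight $t$, merging) or must end. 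I would write the resulting sum as a functional equation in the Catalan series and compare it with the known generating function of A033282,
\[
\sum_{r}V_r(t)z^r,
\]
which is the generating function for dissections of a polygon by number of diagonals. As a sanity check I would verify small cases: $n=1$ gives $V_0=1$, and $n=2$ gives $V_1=1+2t$, matching the three parking functions with $\phi=12$ twice and $\phi=21$ once. The other classes should produce the same functional equation, possibly after a bijection between block structures. Equation \eqref{eq:twoPatternDescents} then follows because $\asc+\des=n-1$ for permutations.

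\emph{Root location.} The coefficients $\frac{1}{k+1}\binom{r}{k}\binom{r+k+2}{k}$ are hypergeometric. I would write
\[
V_r(t)={}_2F_1(-r,r+3;2;-t),
\]
which is a Jacobi polynomial $\JacP_r^{(1,\cdot)}(1+2t)$ up to a positive scalar. Its zeros are therefore simple and lie in $t<0$, since the Jacobi zeros lie in $(-1,1)$. Strict interlacing of consecutive rows follows from the three-term recurrence for orthogonal polynomials, transported through this affine change of variables. For the reciprocal family $t^{n-1}V_{n-1}(1/t)$, the map $t\mapsto1/t$ is decreasing on $(-\infty,0)$, so it preserves simplicity, negativity, and interlacing with the reversed orientation; the degree condition also holds since $V_r(0)=1$.

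\emph{Main obstacle.} The hard step is the enumeration: turning the refinement condition into exactly the A033282 generating function uniformly for all four classes. If that generating-function manipulation resists, I would fall back to a direct bijection from pairs ($\Pi$-avoider, compatible Dyck path) with $k$ ascents to dissections with $k$ diagonals.
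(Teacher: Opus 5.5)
\textbf{Verdict: there is a genuine gap, in the enumeration.} Your reduction is correct. $\phi(\wvec)=\sigma$ occurs with Dyck path $D$ exactly when $\Des(\sigma)$ lies in the set $B(D)\subseteq[n-1]$ of run boundaries. Your root-location argument is essentially the paper's; the unspecified Jacobi parameter is $1$, since ${}_2F_1(-r,r+3;2;-t)\doteq\JacP_r^{(1,1)}(1+2t)$. The problem is the enumeration, which is the actual content of \eqref{eq:twoPatternAscents}:
\begin{itemize}
\item You sketch only the layered class, and even there you defer to an unspecified ``functional equation in the Catalan series.''
\item For the other three classes you offer only the hope that they ``should produce the same functional equation, possibly after a bijection.''
\item The identification with $V_{n-1}$ is never carried out.
\end{itemize}
As written, nothing establishes \eqref{eq:twoPatternAscents} for any of the four classes.

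\textbf{The missing idea.} All four classes share one property: for each $\Pi\in\mathscr P$, the map $\sigma\mapsto\Des(\sigma)$ is a bijection from $\Pi$-avoiders in $S_n$ onto subsets of $[n-1]$.
\begin{itemize}
\item Layered and reverse-layered permutations are determined by their block decompositions.
\item A $\{132,312\}$-avoider is one in which every entry after the first is a new left-to-right maximum or minimum. It has a descent exactly before each new minimum.
\item A $\{213,231\}$-avoider is one in which every entry is the maximum or minimum of its suffix. It has a descent exactly at each suffix maximum.
\end{itemize}

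\textbf{Completing the argument.} Swap the order of your sum. Fix a Dyck path $D$ with $j$ vertical runs, so $|B(D)|=j-1$. Its compatible avoiders are exactly those with $\Des(\sigma)\subseteq B(D)$, so $D$ contributes
\[
\sum_{S\subseteq B(D)}t^{n-1-|S|}=t^{n-j}(1+t)^{j-1},
\]
whatever $\Pi$ is. Summing with the Narayana numbers gives $\sum_jN(n,j)t^{n-j}(1+t)^{j-1}$.

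For the identification with $V_{n-1}$, write $i=n-j$ and use $\binom{n}{i+1}\binom{n-i-1}{k-i}=\binom{n}{k+1}\binom{k+1}{i+1}$. Then Vandermonde gives the coefficient of $t^k$ as
\[
\frac1n\binom{n}{k+1}\binom{n+k+1}{k}=\frac{1}{k+1}\binom{n-1}{k}\binom{n+k+1}{k}.
\]

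This is exactly the paper's argument, which takes the per-path description (``$n-j$ forced ascents, independent choices at the $j-1$ boundaries'') from Adeniran--Pudwell's Theorem~23. With this observation your concern about $\phi$ not commuting with the symmetries disappears. No class-by-class generating function is needed.
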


\begin{proof}
The proof of~\cite[Theorem~23]{AdeniranPudwell2023} describes the admissible
labelings of each Dyck path for all four choices of $\Pi$.  If the path has
$j$ vertical runs, then the labels within those runs force $n-j$ ascents.
At each of the remaining $j-1$ boundaries, one chooses independently whether
the adjacent labels form an ascent or a descent.  Since Dyck paths of
semilength $n$ with $j$ vertical runs are counted by the Narayana number
\[
N(n,j)=\frac{1}{n}\binom{n}{j}\binom{n}{j-1},
\]
the ascent polynomial is
\begin{equation}\label{eq:twoPatternNarayanaTransform}
\sum_{j=1}^{n}N(n,j)t^{n-j}(1+t)^{j-1}.
\end{equation}
Extracting the coefficient of $t^k$ and applying the Chu--Vandermonde
identity gives
\[
[t^k]\sum_{j=1}^{n}N(n,j)t^{n-j}(1+t)^{j-1}
=\frac{1}{k+1}\binom{n-1}{k}\binom{n+k+1}{k}.
\]
This proves~\eqref{eq:twoPatternAscents}.  Since $\phi(\wvec)$ is a
permutation of length $n$, its numbers of ascents and descents sum to $n-1$;
hence~\eqref{eq:twoPatternDescents} follows by reciprocal reversal.

It remains to prove the root statements.  Coefficient comparison with the
hypergeometric form of the Jacobi polynomial gives
\[
V_r(t)\doteq \JacP_r^{(1,1)}(1+2t).
\]
The zeros of consecutive Jacobi polynomials $\JacP_r^{(1,1)}$ are simple,
lie in $(-1,1)$, and strictly interlace by the classical theory of Jacobi
polynomials; see~\cite[Section~1]{DriverJordaanMbuyi2008}.  The affine change
$x=1+2t$ therefore puts the zeros of $V_r$ in $(-1,0)$ and preserves strict
interlacing.  Reciprocal reversal preserves strict interlacing on the negative
axis, proving the same assertions for~\eqref{eq:twoPatternDescents}.
\end{proof}

\section{Further questions}\label{sec:questions}

The proof of Theorem~\ref{thm:mainChow} works at fixed alphabet size and
arbitrary word length.  The noncrossing Chow family lies on the diagonal
$(r,m)=(n,n+1)$, which leads to several natural questions.

\begin{problem}\label{prob:consecutiveChowInterlacing}
Do $H_{\NC_{n+1}}(t)$ and $H_{\NC_{n+2}}(t)$ interlace for every $n$?
\end{problem}

\begin{problem}\label{prob:diagonalRecurrence}
Find a recurrence that combines alphabet extension and word extension along
$(r,m)=(n,n+1)$.  In particular, is there a useful scalar differential
recurrence for $H_{\NC_{n+1}}(t)$?
\end{problem}

The last-letter recurrence does not directly compare consecutive values of
$n$.  A transition that preserves a larger interlacing vector could strengthen
Theorem~\ref{thm:mainChow} to consecutive interlacing as in
Problem~\ref{prob:consecutiveChowInterlacing}.

The peak--tie recurrence invites a different refinement.

\begin{problem}\label{prob:peakTieDescentRefinement}
Refine Theorem~\ref{thm:parkingPeakTieRecurrence} by descents while retaining
a finite local state.
\end{problem}

\subsection*{Software}

The open-source package \texttt{polytool}~\cite{AlexanderssonPolytool}
implements recurrence searches and exact tests for real-rootedness and
interlacing.  The Lean library
\texttt{RealRooted}~\cite{AlexanderssonRealRooted} contains formalized results
on real-rootedness and interlacing.  In particular, it contains formal proofs
for the Narayana rows \oeis{A001263} and for the mutually reciprocal rows
\oeis{A033282} and \oeis{A126216}.  Neither package is needed for the proofs
above.

\subsection*{Acknowledgements}

We thank Petter Br\"and\'en for pointing out that the unrefined Smirnov
real-rootedness follows from the more general Toeplitz-matrix results
in~\cite{BrandenVecchi2025TN}.  We also thank Petter Br\"and\'en for first
drawing our attention to the connection between the last-letter recurrence
and Leander's work.  We thank Katharina Jochemko for later raising the same
connection and for comments that led us to clarify the interlacing terminology.
We used ChatGPT and Codex (OpenAI) for exploratory assistance and
editorial revision, and Claude (Anthropic) for an additional critical reading
of the manuscript.  All mathematical claims and proofs remain our
responsibility.

\clearpage
\appendix

\section{The boundary Euler inverse}
\label{app:boundaryEulerInverse}

\begin{lemma}[Boundary Euler inverse]\label{lem:boundaryEulerInverse}
Let $m\geq1$, $c>0$, and $\gamma>0$.  Define
\[
\defin{E_\gamma(x)}\coloneqq
{}_3F_2\!\left(
\begin{matrix}-m,c+m,\gamma\\c,\gamma+1\end{matrix};x\right).
\]
Here $(a)_0=1$ and $(a)_k=a(a+1)\dotsm(a+k-1)$ for $k\geq1$.  This
hypergeometric series terminates at degree $m$.  If $\gamma>c+m-1$, then for
every real polynomial $p$ of degree less than $m$,
\begin{equation}\label{eq:boundarySignedMoment}
\int_0^1x^{c-1}E_\gamma(x)p(x)\,dx
=-E_\gamma(1)\int_1^\infty p(u)u^{c-\gamma-1}\,du.
\end{equation}
Under the same inequality,
\begin{equation}\label{eq:boundaryEulerEndpoint}
E_\gamma(1)
=(-1)^m\frac{m!(\gamma-c-m+1)_m}{(c)_m(\gamma+1)_m},
\qquad
\operatorname{sgn}E_\gamma(1)=(-1)^m,
\end{equation}
and $E_\gamma$ has $m$ simple zeros, all in $(0,1)$.
\end{lemma}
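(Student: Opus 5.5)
The plan is to reduce $E_\gamma$ to the orthogonal polynomial $\Phi(x)={}_2F_1(-m,c+m;c;x)$ through an Euler-type integral. There are three steps: an integral representation, the signed-moment identity via orthogonality, and then the endpoint value and the zero count.

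\emph{Integral representation.} Termwise, $\gamma/(\gamma+k)=\gamma\int_0^1 s^{\gamma+k-1}\,ds$. Substituting this into the series gives
\[
E_\gamma(x)=\gamma\int_0^1 s^{\gamma-1}\Phi(sx)\,ds
=\gamma x^{-\gamma}\int_0^x y^{\gamma-1}\Phi(y)\,dy ,
\]
which is the integrated form of $(\theta+\gamma)E_\gamma=\gamma\Phi$. The series terminates at degree $m$ because of the parameter $-m$. The coefficient of $x^m$ is a nonzero multiple of $\gamma/(\gamma+m)$, so $\deg E_\gamma=m$. Up to a positive scalar, $\Phi$ equals $\JacP_m^{(c-1,0)}(1-2x)$. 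Hence $\int_0^1 y^{c-1}\Phi(y)y^k\,dy=0$ for every $0\le k<m$ (Jacobi orthogonality for the weight $y^{c-1}$ on $(0,1)$, which requires $c>0$).

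\emph{Signed moments.} By linearity it suffices to take $p(x)=x^k$ with $k<m$. Insert the representation and apply Fubini on $\{0<y<x<1\}$; this is legitimate since $\gamma>0$ and $c>0$. Integrating out $x$ gives
\[
\int_0^1x^{c-1+k}E_\gamma\,dx=\frac{\gamma}{c+k-\gamma}\int_0^1y^{\gamma-1}\Phi(y)\bigl(1-y^{c+k-\gamma}\bigr)dy .
\]
The term containing $y^{c+k-1}\Phi(y)$ vanishes by orthogonality. What remains is $E_\gamma(1)/(c+k-\gamma)$, since the representation at $x=1$ reads $E_\gamma(1)=\gamma\int_0^1 y^{\gamma-1}\Phi$. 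The hypothesis $\gamma>c+m-1\ge c+k$ makes $\int_1^\infty u^{k+c-\gamma-1}\,du=1/(\gamma-c-k)$ convergent. This yields~\eqref{eq:boundarySignedMoment}. I expect the bookkeeping of this step, namely the Fubini swap and recognising the leftover integral as $E_\gamma(1)$, to be the only real obstacle.

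\emph{Endpoint value and zeros.} At $x=1$ the series is balanced, because $c+(\gamma+1)=-m+(c+m)+\gamma+1$. Pfaff--Saalsch\"utz therefore gives
\[
E_\gamma(1)=\frac{(\gamma+1-c-m)_m\,(1)_m}{(\gamma+1)_m\,(1-c-m)_m}.
\]
Using $(1-c-m)_m=(-1)^m(c)_m$, this is~\eqref{eq:boundaryEulerEndpoint}. Since $\gamma-c-m+1>0$, every factor apart from $(-1)^m$ is positive, so $\operatorname{sgn}E_\gamma(1)=(-1)^m$.

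For the zeros, let $r_1<\dots<r_s$ be the sign-changing zeros of $E_\gamma$ in $(0,1)$, and suppose $s<m$. Put $p=(-1)^s\prod_i(x-r_i)$. Because $E_\gamma(0)=1>0$, the product $E_\gamma p$ is nonnegative on $(0,1)$ and not identically zero, so the left side of~\eqref{eq:boundarySignedMoment} is positive. The sign of $E_\gamma$ just below $1$ is $(-1)^s$, so $E_\gamma(1)$ is either $0$ or has sign $(-1)^s$. It is nonzero by the endpoint formula, so $(-1)^s=(-1)^m$. On $(1,\infty)$ we have $p>0$, so the right side equals $-E_\gamma(1)\cdot(\text{positive})$, whose sign is $-(-1)^s$. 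Comparing signs forces $s$ odd and $m$ even, or $s$ even and $m$ odd, which contradicts $(-1)^s=(-1)^m$. (Directly: the right side has sign $-(-1)^m=-(-1)^s$ while the left side is positive, which is impossible once the parities agree.)

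Hence $s\ge m$. Combined with $\deg E_\gamma=m$, this gives exactly $m$ simple zeros, all lying in $(0,1)$.
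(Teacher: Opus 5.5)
Your overall route is the paper's: the Euler integral $E_\gamma(x)=\gamma\int_0^1 s^{\gamma-1}\Phi(sx)\,ds$, Jacobi orthogonality of $\Phi$ for the weight $x^{c-1}$ on $(0,1)$, the signed moment, Pfaff--Saalsch\"utz for $E_\gamma(1)$, and a test-polynomial contradiction for the zeros. For the signed moment you use a Fubini swap on $\{0<y<x<1\}$. This is equivalent to the paper's single integration by parts of $(\theta+\gamma)E_\gamma=\gamma\Phi$, and your computation there is correct. Everything before the zero count checks out.

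The zero count, however, contains a sign error, and as written the argument fails whenever $m$ is odd. You assert $p>0$ on $(1,\infty)$. But $p=(-1)^s\prod_i(x-r_i)$ with every $r_i<1$ has sign $(-1)^s$ there. With your sign, the right side of~\eqref{eq:boundarySignedMoment} has sign $-(-1)^s=-(-1)^m$, which is $+1$ when $s$ and $m$ are odd. That agrees with the positive left side, so no contradiction arises. Your parenthetical claim that this is ``impossible once the parities agree'' is therefore false.

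The correct observation, which is the one the paper uses, is that $E_\gamma(1)$ and $p(u)$ for $u>1$ have the same sign $(-1)^s$. Indeed, $p$ was built to match the sign of $E_\gamma$ on $(0,1)$, it has no zeros to the right of $r_s$, and $E_\gamma$ has no sign change on $(r_s,1]$. Hence $E_\gamma(1)p(u)>0$ for all $u>1$. The right side is then strictly negative for either parity, contradicting the positive left side. With this correction the proof is complete, and the parity relation $(-1)^s=(-1)^m$ is not even needed.
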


\begin{proof}
Put \(\theta=x\,d/dx\) and
\(\defin{\Phi(x)}\coloneqq{}_2F_1(-m,c+m;c;x)\).  Since
$(\gamma)_k/(\gamma+1)_k=\gamma/(\gamma+k)$, termwise integration gives
\[
E_\gamma(x)=\gamma\int_0^1u^{\gamma-1}\Phi(ux)\,du.
\]
Thus $(\theta+\gamma)E_\gamma=\gamma\Phi$ and $E_\gamma(0)=1$, which
explains the term Euler inverse.
The Jacobi polynomial formula gives
$\Phi\doteq \JacP_m^{(c-1,0)}(1-2x)$, so $\Phi$ is orthogonal to every
polynomial of degree less than $m$ for the weight $x^{c-1}$ on $(0,1)$.
For $0\leq r<m$, orthogonality and one integration by parts give
\[
0=\int_0^1x^{c+r-1}\Phi(x)\,dx
=\frac1\gamma\left(
E_\gamma(1)+(\gamma-c-r)
\int_0^1x^{c+r-1}E_\gamma(x)\,dx\right).
\]
Since
$1/(\gamma-c-r)=\int_1^\infty u^{c-\gamma+r-1}\,du$, linearity gives
\eqref{eq:boundarySignedMoment}.  The Pfaff--Saalschütz summation gives
\eqref{eq:boundaryEulerEndpoint}, whose sign follows from
$\gamma-c-m+1>0$.

Suppose that $E_\gamma$ has only $s<m$ sign-changing zeros
$r_1,\dotsc,r_s$ in $(0,1)$.  Since $E_\gamma(0)=1$, its endpoint sign
implies $s\equiv m\pmod2$.  The test polynomial
\[
p_0(x)=(-1)^s\prod_{i=1}^s(x-r_i)
\]
makes the left side of~\eqref{eq:boundarySignedMoment} positive, whereas
$E_\gamma(1)p_0(u)>0$ for $u>1$, making the right side negative.  This is a
contradiction.  Hence $s=m$.  Since $E_\gamma$ has degree $m$, all its zeros
are simple and lie in $(0,1)$.
\end{proof}

\section{A separate proof of individual real-rootedness}
\label{app:independentToricRealRootedness}

We give a second proof of
Corollary~\ref{cor:toricContributionRealRooted} that does not use the
common-interlacer theorem.

\begin{proof}[Second proof of
Corollary~\ref{cor:toricContributionRealRooted}]
The assertion is immediate when \(n=1\), so assume \(m\geq1\) and use the
polynomials \(R_d\) from~\eqref{eq:RdDefinition}.

Let \(\boxtimes_m\) denote the degree-\(m\) finite multiplicative
convolution, also called Schur--Szegő composition.  If \(0\leq d<m\), the
hypergeometric convolution formula, Euler's transformation, and the Jacobi
polynomial formula give, up to a nonzero real scalar,
\begin{equation}\label{eq:RdSchurSzego}
R_d=\kappa_{m,d}
\left((1-x)^{d+1}
\JacP_{m-d-1}^{(\epsilon+d+1,d+1)}(1-2x)\right)
\boxtimes_m
\left((1-x)^{m-d}
\JacP_d^{(c-1,m-d)}(1-2x)\right),
\qquad \kappa_{m,d}\neq0;
\end{equation}
see~\cite[Theorem~3.1]{MartinezFinkelshteinMoralesPerales2024}.  All four
Jacobi parameters are greater than \(-1\), so both factors have only
nonnegative real zeros.  The Malo--Schur--Szegő closure theorem
\cite[Proposition~2.7]{MartinezFinkelshteinMoralesPerales2024} therefore
shows that \(R_d\) is real-rooted.

It remains to treat $d=m$.  Since
$m+1+\epsilon=c+m+\frac12$, the hypergeometric expressions
in~\eqref{eq:RdDefinition} and Lemma~\ref{lem:boundaryEulerInverse} give
$R_m=E_{c+m+1/2}$.  That lemma shows that $R_m$ has $m$ simple zeros in
$(0,1)$.  Thus every $R_d$ is real-rooted, and the observation following
\eqref{eq:RdDefinition} completes the proof.
\end{proof}

\bibliographystyle{alpha}
\bibliography{bibliography}

\end{document}